\newtheorem{thm}{Theorem}[section]
\newtheorem{lemma}[thm]{Lemma} \newtheorem{cor}[thm]{Corollary}
\newtheorem{prop}[thm]{Proposition}
\theoremstyle{definition}
\newtheorem{defn}[thm]{Definition}
\newtheorem{question}[thm]{Question}
\newtheorem{example}[thm]{Example}
\newcommand{\Z}{\mathbb Z}
\newcommand{\Q}{\mathbb Q}
\newcommand{\F}{\mathbb F}
\newcommand{\Fp}{\mathbb{F}_p}
\newcommand{\mb}[1]{\mathbb{#1}}
\title{Is the number of subrings of index $p^e$ in $\Z^n$ polynomial in $p$?}
\author{Kelly Isham}
\address{214 McGregory Hall, Colgate University, Hamilton, NY 13346}
\begin{document}
	\maketitle

\begin{abstract}
	It is well-known that for each fixed $n$ and $e$, the number of subgroups of index $p^e$ in $\Z^n$ is a polynomial in $p$. Is this true for \emph{subrings} in $\Z^n$ of index $p^e$? Let $f_n(k)$ denote the number of subrings of index $k$ in $\Z^n$. We can define the subring zeta function over $\Z^n$ to be $\zeta_{\Z^n}^R(s) = \sum_{k \ge 1} f_n(k)k^{-s}$. Is this zeta function uniform? These two questions are closely related.
	
	 In this paper, we describe what is known about these questions, and we make progress toward answering them in a couple ways. First, we describe the connection between counting subrings of index $p^e$ in $\Z^n$ and counting the solutions to a corresponding set of equations modulo various powers of $p$. We then show that  the number of solutions to certain subsets of these equations is a polynomial in $p$ for any fixed $n$. On the other hand, we give an example for which the number of solutions to a certain subset of equations is not polynomial. Finally, we give an explicit polynomial formula for the number of `irreducible' subrings of index $p^{n+2}$ in $\Z^n$.
\end{abstract}

\section{Introduction}
Throughout this paper, we will require subrings to contain the identity. Let $f_n(k)$ denote the number of subrings of index $k$ in $\Z^n$. Since this function is multiplicative, it suffices to consider $f_n(p^e)$ for each prime $p$. We are motivated by the main question below.

\begin{question}\label{poly_ques}
	For fixed integers $n$ and $e$, is $f_n(p^e)$ a polynomial in $p$?
\end{question}

While this question is interesting in its own right, understanding the answer gives us information about a related zeta function. Following the notation conventions of \cite{akkm, ish_subrings}, we define
\begin{align*}
\zeta_{\Z^n}^R(s) &= \sum_{\substack{S \text{ is a finite index}\\\text{subring  of } \Z^n}} [\Z^n: S]^{-s} \\
&= \sum_{k\ge 1 } f_n(k)k^{-s}.
\end{align*}
This zeta function is known to have an Euler product with local factors
$$
\zeta_{\Z^n, \, p}^R(s) = \sum_{e \ge 0} f_n(p^e) p^{-es}.
$$
Each local factor $\zeta_{\Z^n, \, p}^R(s)$ can be written as a rational function in $p$ and $p^{-s}$ \cite{dusautoy, gss}. It is not known how these rational functions vary with $p$. 

\begin{defn}
	A zeta function $\zeta^*(s)$ is \emph{finitely uniform} if there exist finitely many rational functions $W_1(X,Y)$ $, \ldots, W_r(X,Y) \in \Q(X,Y)$ so that for every prime $p$, $\zeta_p^*(s) = W_i(p, p^{-s})$ for some integer $i \in [1,r].$ If $r=1$, $\zeta^*(s)$ is called \emph{uniform}.
\end{defn}

\begin{question}\label{uniform_ques}
	Is $\zeta_{\Z^n}^R(s)$ uniform?
\end{question}

The zeta function $\zeta_{\Z^n}^R(s)$ is uniform if and only if for all $e$, $f_n(p^e)$ is a polynomial in $p$. Thus establishing an answer to Question \ref{poly_ques} immediately answers Question \ref{uniform_ques}.

There does not seem to be a consensus about what the answers to Questions \ref{poly_ques} or \ref{uniform_ques} should be. In this paper, we provide evidence that suggests an affirmative answer to both questions. We also discuss reasons why the answer could be no, and explain an algorithmic way of searching for counterexamples. 
We end by giving examples to show that we are currently at the boundary of what is computationally feasible and that new techniques may be needed to give a complete answer to Questions \ref{poly_ques} and \ref{uniform_ques}.

\subsection{Related Problems}
It is often too difficult to give an exact formula for a counting function. A step toward this problem is to classify the counting function. We will use three different categories: polynomial, quasipolynomial, and non-quasipolynomial. A function $F(k)$ is quasipolynomial in $k$ if there exist finitely many polynomials $G_1(k), \ldots, G_{N-1}(k)$ so that $F(k) = G_i(k)$ whenever $k\equiv i \pmod{N}.$ A function is non-quasipolynomial if it is not quasipolynomial.

Let $a_n(k)$ denote the number of subgroups of index $k$ in $\Z^n$. This counting function is well-understood.  In fact, there is an explicit formula for $a_n(p^e)$ that is polynomial in $p$, namely
\begin{equation}\label{subgrp_eq}
a_n(p^e) = \binom{n-1+e}{e}_p,
\end{equation}
where $\binom{}{}_p$ represents the $p$-binomial coefficient; see \cite[Chapter 1.8]{stanley} for a proof of this fact. We can define the subgroup zeta function for $\Z^n$ as
$$
\zeta_{\Z^n}^G(s) =  \sum_{k\ge 1} a_{n}(k)k^{-s}.
$$
The superscript $G$ is decoration to denote this is a zeta function of a group. This zeta function has an Euler product with local factors
$$
\zeta_{\Z^n, \, p}^G(s) = \sum_{e\ge 0} a_{n}(p^e)p^{-es}.
$$
Equation \eqref{subgrp_eq} implies that $\zeta_{\Z^n}^G(s)$ is uniform, and that
$$
\zeta_{\Z^n}^G(s) = \zeta(s) \zeta(s-1) \cdots \zeta(s-n+1).
$$
This fact is well-known, see Lubotzky and Segal \cite{lubotzkysegal} for five different proofs.

Questions \ref{poly_ques} and \ref{uniform_ques} are motivated by this example. Perhaps surprisingly, we find that the multiplicative structure of subrings in $\Z^n$ makes this problem much more difficult. In Sections \ref{prev_work} and \ref{background}, we will discuss how little is known about $f_n(k)$ and how difficult the problem turns out to be. Before moving to subrings in $\Z^n$, we highlight some examples of related subring and subgroup zeta functions to show that these zeta functions need not be uniform.

Let $\mathcal{O}_K$ be the ring of integers in a quadratic number field $K$. Set $b_K(p^e)$ to be the number of subrings in $\mathcal{O}_K$ of index $p^e$ \emph{with or without identity}. For a fixed quadratic number field $K$ and fixed exponent $e$, how does $b_K(p^e)$ vary as a function of $p$? Snocken \cite{snocken} studies the subring zeta function $\zeta_{\mathcal{O}_K}^{R, *}(s)$ over $\mathcal{O}_K$. This zeta function has an Euler product and its local factors are
$$\zeta_{\mathcal{O}_K, \, p}^{R, *}(s) = \sum_{e \ge 0} b_K(p^e) p^{-es}.$$ 
We use the notation $\zeta_{\mathcal{O}_K}^{R, *}(s)$ to represent that the subrings need not have identity.

He explicitly computes the local factor in the cases when $p$ is ramified, inert, or split in $\mathcal{O}_K$. 

\begin{prop} \cite[Proposition 7.20]{snocken}
	Let $\mathcal{O}_K$ be the ring of integers in a quadratic number field $K$. Then
	$$
	\zeta_{\mathcal{O}_K, \, p}^{R, *}(s)  = 
	\begin{cases}
		\frac{1-p^{-2s}}{(1-p^{-s})^2(1-p^{1-3s})} & \text{if } p \text{ is ramified in } \mathcal{O}_K\\[4pt]
			\frac{1-p^{-4s}}{(1-p^{-s})(1-p^{-2s})(1-p^{1-3s})} & \text{if } p \text{ is inert in } \mathcal{O}_K\\[4pt]
				\frac{1-p^{-2s}}{(1-p^{-s})^3(1-p^{1-3s})} &\text{if }  p \text{ is split in } \mathcal{O}_K.
	\end{cases}
	$$
\end{prop}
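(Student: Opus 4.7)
The plan is to parameterize subrings of $\mathcal{O}_K$ by Hermite normal form, read off the multiplicative closure condition as a congruence mod $a$, and then compute the local factor by counting solutions case by case. Writing $\mathcal{O}_K = \Z \oplus \Z\omega$ with $\omega^2 = v\omega + u$, every finite-index subgroup has a unique HNF basis $\{a,\, b + c\omega\}$ with $a, c > 0$, $0 \le b < a$, and index $ac$. Expanding $(b + c\omega)^2 = (b^2 + uc^2) + (2bc + vc^2)\omega$ and asking when it lies in $\Z a + \Z(b + c\omega)$ yields the subring condition $a \mid b^2 + vbc - uc^2 = N(b + c\omega)$. Restricting to $p$-power indices, writing $a = p^i$ and $c = p^j$, and setting
$$N(i,j) \;=\; \#\bigl\{\, b \in [0, p^i) : b^2 + v b p^j - u p^{2j} \equiv 0 \pmod{p^i}\,\bigr\},$$
the local factor becomes $\zeta^{R,*}_{\mathcal{O}_K,p}(s) = \sum_{i, j \ge 0} N(i,j)\, p^{-(i+j)s}$.

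Since the local factor depends only on $\mathcal{O}_K \otimes \Z_p$, I would normalize the $\Z_p$-basis in each case: take $\omega$ a uniformizer when $p$ ramifies, so that $v = 0$ and $v_p(u) = 1$; take $v = 0$ with $-u$ a non-square unit when $p$ is inert; and take $\omega$ one of the two idempotents when $p$ splits, so that $v = 1$, $u = 0$ and the norm form becomes $b(b+c)$. In the ramified case the congruence $b^2 \equiv u p^{2j} \pmod{p^i}$ either reduces to $b^2 \equiv 0$ (when $2j + 1 \ge i$, giving $p^{\lfloor i/2 \rfloor}$ solutions) or forces $b^2$ to have odd $p$-adic valuation, which is impossible. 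The inert case is parallel, with $N(i,j) = p^{\lfloor i/2 \rfloor}$ exactly when $2j \ge i$ and zero otherwise, since $-u$ is not a square mod $p$.

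The split case is the main obstacle: $b(b + p^j) \equiv 0 \pmod{p^i}$ has two geometric branches, $b \equiv 0$ and $b \equiv -p^j$, that collide at $v_p(b) = j$. I would stratify $N(i,j)$ by $v_p(b)$ relative to $j$: when $v_p(b) < j$ the product has valuation $2 v_p(b)$, when $v_p(b) > j$ it has valuation $v_p(b) + j$, and when $v_p(b) = j$, writing $b = p^j b'$ with $p \nmid b'$, the product has valuation $2j + v_p(b' + 1)$, so one must count lifts of $b' \equiv -1$ through higher powers of $p$. Assembling the three contributions, substituting $t = p^{-s}$ and evaluating $\sum_{i,j} N(i,j)\, t^{i+j}$ by parity of $i$ and by the sign of $2j - i$, the inner sums over $j$ become geometric series in $t$ and combine with the diagonal contributions into the backbone $(1 - p t^3)^{-1}$; after simplification the three cases collapse to the three rational functions in the statement. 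The most delicate step is matching the boundary stratum $v_p(b) = j$ against the two bulk strata so that the split-case $N(i,j)$ assembles into a clean closed form before summing.
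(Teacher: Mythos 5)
The paper does not prove this statement; it cites Snocken's thesis. Your HNF parameterization together with the norm-divisibility criterion $a \mid b^2 + vbc - uc^2$ is exactly the standard route, and your handling of the ramified and inert cases is correct: after the normalizations $v=0$ with $v_p(u)=1$ (ramified) and $v=0$ with $u$ a nonsquare unit (inert), the condition $p^i \mid b^2 - u p^{2j}$ forces $N(i,j) = p^{\lfloor i/2\rfloor}$ precisely when $i \le 2j+1$ (resp. $i \le 2j$) and $N(i,j)=0$ otherwise, and summing $\sum_{i,j} N(i,j)\,p^{-(i+j)s}$ does reproduce the first two displayed factors.

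The genuine gap is the split case, which you rightly flag as the delicate one and then dispose of with the unverified claim that ``after simplification the three cases collapse to the three rational functions in the statement.'' That needs to be checked, because the arithmetic does not appear to bear it out as written. With $N(i,j) = \#\{\, b \in [0,p^i) : p^i \mid b(b+p^j)\,\}$ one has
$$\sum_{i+j=2} N(i,j) = N(2,0) + N(1,1) + N(0,2) = 2 + 1 + 1 = 4,$$
which matches the four subrings $p^2\Z_p \times \Z_p$, $p\Z_p \times p\Z_p$, $\Z_p \times p^2\Z_p$, and $\{(x,y): x \equiv y \bmod p^2\}$ of $\Z_p\times\Z_p$; but the coefficient of $p^{-2s}$ in $\frac{1-p^{-2s}}{(1-p^{-s})^3(1-p^{1-3s})}$ is $5$. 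Carrying your stratification by $v_p(b)$ relative to $j$ all the way through in fact yields
$$\sum_{i,j\ge 0} N(i,j)\, t^{i+j} \;=\; \frac{(1+t)^2}{(1-t)(1-pt^3)} \;=\; \frac{(1-p^{-2s})^2}{(1-p^{-s})^3(1-p^{1-3s})},$$
which differs from the quoted split factor by a square in the numerator. So either the formula as printed contains a typo, or there is a sign or convention slip in the reduction; in either case you cannot leave the split branch as ``it collapses'' --- the boundary stratum $v_p(b) = j$, where the two roots of $b(b+p^j)$ interact, is exactly what determines the answer and must be computed explicitly before you can claim to have recovered the stated rational function.
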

As a corollary, we see that $\zeta_{\mathcal{O}_K}^{R, *}(s)$ is finitely uniform. This implies that $b_K(p^e)$ is quasipolynomial in $p$.

The story for quadratic fields is quite different when restricting to finite index subrings in $\mathcal{O}_K$ that contain the identity, that is, orders in $\mathcal{O}_K$. When $K$ is a quadratic number field, there is a unique order of index $k$ for each positive integer $k$. Letting $o_K(k)$ be the number of orders in $\mathcal{O}_K$ of index $k$, we find $o_K(p^e) =1$ is a polynomial in $p$ for each fixed quadratic number field $K$ and integer $e \ge 0$. Further the corresponding zeta function is $$\zeta_{\mathcal{O}_K}^R(s) = \sum_{k\ge 1} o_K(k)k^{-s} = \zeta(s)$$ where $\zeta(s)$ is the Riemann zeta function. Thus $\zeta_{\mathcal{O}_K}^R(s)$ is uniform. 

 Kaplan, Marcinek, and Takloo-Bighash \cite[Proposition 1]{kmt} show that $o_K(p)$ depends on the decomposition of $p$ in $\mathcal{O}_K$. This implies that $\zeta_{\mathcal{O}_K}^R(s)$ will not be uniform when the degree of the number field is larger than 2. There is no conjecture about whether this zeta function should be finitely uniform. The zeta functions $\zeta_{\Z^n}^R(s)$ and $\zeta_{\mathcal{O}_K}^R(s)$ are closely related - if $p$ splits completely in $\mathcal{O}_K$, then $\zeta_{\Z^n ,p}^R(s) =\zeta_{\mathcal{O}_K,p}^R(s).$

There are examples of subgroup zeta functions that are not finitely uniform. See \cite[Theorem 2.20]{dusautoy} for an example using a group that is based on the $\F_q$-points of the elliptic curve $y^2 = x^3-x$. However, Voll \cite{voll} states that there are no known examples of subring zeta functions that are not finitely uniform.

\subsection{Finding non-polynomial pieces}
One way to show that certain zeta functions are not uniform is to show that for some fixed $e$, the sequence of coefficients $c(p^e)$  is not polynomial in $p$. It is often difficult to give an exact count for $c(p^e)$, so we split the count into several pieces.

For example, in order to count the number of $n\times n$ invertible matrices in a ring $R$, one must understand what the conditions on the entries are so that the determinant is a unit. Since there are too many entries to consider as $n$ grows large, one could fix a pattern of entries to be equal to 0 and consider the simpler determinant to find conditions on the remaining entries, then use Inclusion-Exclusion. Let $m_P(q)$ denote the number of invertible matrices with entries in $\mathbb{F}_q$ with all entries in pattern $P$ set to 0. There are several examples of $P$ for which $m_P(q)$ is not polynomial in $q$ \cite{stembridge}; however, it is well-known that the number of invertible $n\times n $ matrices with entries in $\F_q$ is equal to
$$|\text{GL}_n(\mathbb{F}_q)| = \prod_{i=0}^{n-1}(q^n-q^i)$$ and that this function is a polynomial in $q$ for each fixed $n$.

Thus it is possible to split the count for $c(p^e)$ into several pieces which are non-polynomial, yet have $c(p^e)$ be a polynomial.  This shows that finding one non-polynomial piece of a counting formula does not prove that $c(p^e)$ is non-polynomial or that the corresponding zeta function is not uniform. However, it is still a reasonable step to show that parts of the count are non-polynomial. This is often done in the literature; see \cite{paksoffer,vaughanlee}.

\subsection{Previous Work}\label{prev_work}
The subring zeta function $\zeta_{\Z^n}^R(s)$ can be written in terms of simpler zeta functions and Euler products when $n \le 4$. 
\begin{thm}\label{zeta_closed_form} \cite{datskovksywright, nakagawa}
Let $\zeta(s)$ denote the Riemann zeta function.	We have
	\begin{align*}
		\zeta_{\Z^2}^R(s) &= \zeta(s)\\
		\zeta_{\Z^3}^R(s) &= \frac{\zeta(3s-1)\zeta(s)^3}{\zeta(2s)^s}\\
		\zeta_{\Z^4}^R(s) &= \prod_p \frac{1}{(1-p^{-s})^2(1-p^{2-4s})(1-p^{3-6s})}\bigg( 1 + 4p^{-s}\\
		&+2p^{-2s} +(4p-3)p^{-3s} + (5p-1)p^{-4s} + (p^2-5p)p^{-5s} \\
		&+ (3p^2-4p)p^{-6s}-2p^{2-7s} - 4p^{2-8s} - p^{2-9s}\bigg).
	\end{align*}
\end{thm}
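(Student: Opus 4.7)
The plan is to handle the three cases in order of increasing difficulty, since each requires substantially different techniques.

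For $n=2$, I would argue directly. Any finite-index subring $S \subseteq \Z^2$ contains the identity $(1,1)$ and hence the subgroup $\Z \cdot (1,1)$. The quotient $\Z^2/\Z(1,1) \cong \Z$, and the image of $S$ in this quotient is a subgroup of the same index $[\Z^2:S] = k$. So there is exactly one subring of each index $k$, namely $\{(a,b) \in \Z^2 : a \equiv b \pmod k\}$, giving $f_2(k) = 1$ and $\zeta_{\Z^2}^R(s) = \zeta(s)$.

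For $n=3$, I would invoke the Delone--Faddeev correspondence, which puts isomorphism classes of cubic rings (commutative unital rings that are free of rank $3$ as abelian groups) in bijection with $\mathrm{GL}_2(\Z)$-orbits of integral binary cubic forms. Finite-index subrings of $\Z^3$ are precisely the orders in the split \'etale $\Q$-algebra $\Q^3$ equipped with an ordered identification with $\Q^3$; up to stabilizers these correspond to binary cubic forms that split completely over $\Q$. Combining the Datskovsky--Wright computation of the Shintani zeta function on this totally split stratum with the bookkeeping for rigidifications (and the fact that a subring of index $k$ has discriminant $k^2$) produces the product $\zeta(s)^3\zeta(3s-1)/\zeta(2s)^3$.

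For $n=4$, the strategy is analogous but uses Bhargava's parametrization of quartic rings by $\mathrm{GL}_2(\Z)\times\mathrm{SL}_3(\Z)$-orbits on pairs of integral ternary quadratic forms. Subrings of $\Z^4$ again correspond to orders in $\Q^4$ equipped with an ordered embedding, selecting a specific subfamily of pairs of forms. One then computes prime by prime, stratifying the $p$-adic orbit space according to the splitting behavior and $p$-adic content of the pair and summing local orbit counts weighted by $p^{-es}$; the several strata that arise are responsible for the multi-term polynomial numerator in the stated Euler product.

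The principal obstacle is precisely the $n=4$ step: even with the Bhargava parametrization in hand, the $p$-adic case analysis on the prehomogeneous space of pairs of ternary quadratics is intricate and the resulting closed form is correspondingly unwieldy. Pushing this method to $n \ge 5$ would require a similarly tractable parametrization of quintic (and higher) rings together with a manageable local orbit count, and this is exactly what is not currently available, which is why no analogous closed form for $\zeta_{\Z^n}^R(s)$ is known beyond $n \le 4$.
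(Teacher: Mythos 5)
The paper does not actually prove this theorem; it states it as a citation to Datskovsky--Wright and Nakagawa, so there is no internal proof for me to compare you against. Your $n=2$ argument is a genuine and correct elementary proof: every finite-index subring contains $\Z\cdot(1,1)$, the quotient $\Z^2/\Z(1,1)\cong\Z$ forces $S=\{(a,b):a\equiv b\pmod k\}$ for a unique $k$, and one checks multiplicative closure directly, giving $f_2(k)=1$.

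For $n=3$ and $n=4$, what you have written is a roadmap rather than a proof. You correctly identify the machinery (Delone--Faddeev plus Shintani zeta functions of the space of integral binary cubic forms for $n=3$; the prehomogeneous vector space of pairs of integral ternary quadratic forms for $n=4$), but the actual content of the cited results lies in the local $p$-adic orbit counts and the bookkeeping that passes from isomorphism classes of rings (weighted by discriminant) to subrings of the fixed ring $\Z^n$ (weighted by index, using $\mathrm{disc}=k^2$ when the index is $k$). You gesture at ``rigidifications'' and stabilizer corrections for $n=3$ without carrying them out, and the step that you yourself flag as the obstacle for $n=4$ — the stratified $p$-adic computation that yields the explicit nine-term numerator — is precisely the proof; sketching that it exists does not establish the stated formula. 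One attribution note: Nakagawa's work on quartic orders predates Bhargava's parametrization of quartic rings and works directly with the Wright--Yukie prehomogeneous space of pairs of ternary quadratics, so invoking Bhargava is anachronistic relative to the cited reference, although the underlying object is the same. Finally, you wrote the $n=3$ denominator as $\zeta(2s)^3$; this is correct, and the exponent $s$ appearing in the paper's displayed formula is a typographical error.
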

Theorem \ref{zeta_closed_form} implies that $\zeta_{\Z^n}^R(s)$ is uniform when $n \le 4$. There is not even a conjecture about the uniformity of $\zeta_{\Z^5}^R(s)$. 

It is unknown how to use current methods in the literature to give an exact expression for $\zeta_{\Z^n}^R(s)$ when $n \ge 5$. However, we can make progress toward understanding the uniformity of this zeta function by studying the coefficients $f_n(p^e)$. To do so, we break the count into several pieces by decomposing a subring into irreducible subrings.

\begin{defn}\cite{liu} A subring $S$ in $\Z^n$ is \emph{irreducible} if $(x_1, \ldots, x_n) \in S$ implies that $x_1 \equiv \cdots \equiv x_n \pmod{p}$.\end{defn}

 Liu \cite{liu} justifies his use of the term irreducible by showing that a subring in $\Z^n$ can be written as a direct sum of irreducible subrings. Liu also gives a recurrence relation as stated below. Let $g_n(p^e)$ denote the number of irreducible subrings of index $p^e$ in $\Z^n$. Note that Liu writes $g_{n+1}(p^e)$ to denote the number of irreducible subrings of index $p^e$ in $\Z^n$. We shift notation to match that of \cite{akkm, ish_subrings}.

\begin{prop} \cite[Proposition 4.4]{liu} \label{recurrence} Set $f_0(p^e) = 1$ if $e=0$ and $f_0(p^e)=0$ for $e >0$. We have
	$$f_n(p^e) = \sum_{i=0}^e \sum_{j=1}^n \binom{n-1}{ j-1}f_{n-j}(p^{e-i})g_j(p^i).$$
\end{prop}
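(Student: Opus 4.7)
The plan is to establish the recurrence by showing that every subring $S\subseteq\Z^n$ of index $p^e$ decomposes canonically as a direct sum of irreducible subrings supported on the blocks of a partition of $\{1,\ldots,n\}$, and then to count by isolating the block containing the coordinate~$1$.

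First I would attach a partition to $S$. Because $S$ contains $(1,\ldots,1)$ and has $p$-power index, the reduction $\bar S = (S+p\Z^n)/p\Z^n$ is a unital subring of $\F_p^n$, and such subrings are in bijection with set partitions $\mathcal{P}=\{B_1,\ldots,B_t\}$ of $\{1,\ldots,n\}$ via $\mathcal{P}\mapsto\{(x_1,\ldots,x_n):x_a=x_b\text{ whenever }a,b\text{ lie in the same block of }\mathcal{P}\}$. Write $\epsilon_\ell=\sum_{a\in B_\ell}e_a$ for the primitive idempotents of this subring, where $e_a$ is the $a$-th standard basis vector. The next step is to lift each $\epsilon_\ell$ to an idempotent of $S$ itself. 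Pick any $v\in S$ with $v\equiv\epsilon_\ell\pmod p$; in $S\otimes\Z_p\subseteq\Z_p^n$ the sequence $v^{p^k}$ converges $p$-adically to the unique idempotent of $\Z_p^n$ congruent to $\epsilon_\ell$ mod $p$, which is $\epsilon_\ell$ itself. Because $\Z^n/S$ is a finite $p$-group, the natural map $\Z^n/S\to\Z_p^n/(S\otimes\Z_p)$ is an isomorphism, so $S\otimes\Z_p\cap\Z^n=S$, and hence $\epsilon_\ell\in S$.

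With the $\epsilon_\ell$ in $S$, every $s\in S$ splits as $s=\sum_\ell s\cdot\epsilon_\ell$ with $s\cdot\epsilon_\ell\in S\cap\Z^{B_\ell}=:S_\ell$, giving $S=\bigoplus_\ell S_\ell$. Each $S_\ell$ is a unital subring of $\Z^{B_\ell}$ whose mod-$p$ image is the diagonal of $\F_p^{B_\ell}$, hence is irreducible in the sense defined above, and the indices multiply to $p^e$. To produce the recurrence I would organize the sum by the unique block $B\ni 1$: if $|B|=j$ and $[\Z^B:S_B]=p^i$, then the complementary factor is an arbitrary subring of $\Z^{\{1,\ldots,n\}\setminus B}\cong\Z^{n-j}$ of index $p^{e-i}$. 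There are $\binom{n-1}{j-1}$ choices for the remaining $j-1$ coordinates of $B$, $g_j(p^i)$ choices for the irreducible piece on $B$, and $f_{n-j}(p^{e-i})$ choices for the complementary subring; summing over $j\in[1,n]$ and $i\in[0,e]$ yields the stated identity, with the convention $f_0(p^0)=1$ covering the case $j=n$. The main obstacle is the idempotent-lifting step: one must argue that the mod-$p$ idempotents of $\bar S$ actually lie in $S$ and not merely in $S\otimes\Z_p$, and then check that the resulting $\Z$-module decomposition respects the ring structure so that each $S_\ell$ genuinely counts as a unital subring of $\Z^{B_\ell}$.
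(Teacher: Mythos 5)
Your argument is correct and follows the same blueprint as Liu's Proposition 4.4 (which the present paper cites without reproducing the proof): reduce mod $p$ to obtain a set partition, decompose $S$ as a direct sum of irreducible subrings supported on the blocks, and count by conditioning on the block containing coordinate~$1$. The $p$-adic idempotent-lifting step --- taking $v^{p^k}$ to $\epsilon_\ell$ in the $p$-adic completion of $S$ and then using that this completion meets $\Z^n$ exactly in $S$ --- is a clean way to force $\epsilon_\ell\in S$ rather than merely in $S\otimes\mathbb{Z}_p$, which was indeed the crux, and the combinatorial bookkeeping that follows matches Liu's exactly.
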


 Liu's work suggests that it is sufficient to study irreducible subrings in $\Z^n$.

\begin{defn}
	A $n \times n$ matrix is in \emph{Hermite normal form} if it is upper triangular and $0 \le a_{ij} < a_{ii}$ for all $1\le i <j \le n$.
\end{defn}

There is a one-to-one correspondence between subrings of index $k$ in $\Z^n$ and $n \times n$ \emph{subring matrices} with determinant $k$. That is, matrices $A$ in Hermite normal form with determinant $k$ satisfying the following conditions:
\begin{enumerate}
	\item For each pair of columns  $v_i=(x_1, \ldots, x_n)^T$ and $v_j=(y_1, \ldots, y_n)^T$ in $A$, we have $v_i \circ v_j = (x_1y_1, \ldots, x_ny_n)^T$ is in the $\Z$-column span of $A$.
	\item We have $(1, \ldots, 1)^T$ is in the $\Z$-column span of $A$.
\end{enumerate}  
 This correspondence is described in detail in \cite{liu}.

Liu also establishes a one-to-one correspondence between irreducible subrings of index $p^e$ and \emph{irreducible $n \times n$ subring matrices} with determinant $p^e$, which are subring matrices with the additional property that the first $n-1$ columns are divisible by $p$ and $(1, \ldots, 1)^T$ is the last column. An irreducible subring matrix with determinant $p^e$ has the form
$$
A = \begin{pmatrix}
	p^{e_1} & pa_{12} & pa_{13} & \cdots & pa_{1(n-1)}&1 \\
	& p^{e_2} & pa_{23} & \cdots &pa_{2(n-1)} &1\\
	&& p^{e_3} & \cdots & pa_{3(n-1)} &1\\
	&&& \ddots & \vdots& \vdots\\
	&&&&p^{e_{n-1}} &1\\
	&&&&&1
\end{pmatrix}
$$
where $e_i > 0$ for each $1\le i< n$ and $e_1 + \ldots + e_{n-1} = e.$ Let $C_{n,e}$ denote the set of compositions of $e$ into $n-1$ parts. For each $\alpha = (e_1, \ldots, e_{n-1}) \in C_{n,e}$, let $g_\alpha(p)$ denote the number of irreducible subring matrices with given diagonal $(p^{e_1}, \ldots, p^{e_{n-1}}, 1)$. Observe that

$$
g_{n}(p^e) = \sum_{\alpha \in C_{n,e}} g_\alpha(p).
$$

Liu \cite{liu} and Atanasov, Kaplan, Krakoff, and Menzel \cite{akkm} give explicit polynomial formulas for $f_n(p^e)$ when $e \le 8$ and for $g_n(p^i)$ when $i \le n+1$. Atanasov et al. \cite{akkm} prove their results by studying $g_\alpha(p)$ for each diagonal $\alpha \in C_{n,e}$. This method will be described in more detail in Section \ref{background}.

\begin{prop}\cite[Proposition 4.3]{liu}
	Let $n > 0$. We have $g_n(p^i) = 0$ if $i < n-1$,  $g_n(p^{n-1}) =1 ,$ and $g_{n}(p^n) = \frac{p^{n-1}-1}{p-1}.$
	
\end{prop}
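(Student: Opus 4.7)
The plan is to exploit the correspondence between irreducible subring matrices with determinant $p^i$ and compositions $\alpha \in C_{n,i}$. Since every part of $\alpha = (e_1, \ldots, e_{n-1})$ satisfies $e_j \geq 1$, we have $i \geq n-1$, so $C_{n,i} = \emptyset$ and $g_n(p^i) = 0$ whenever $i < n-1$.

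For $i = n-1$ the unique composition is $(1, \ldots, 1)$. Each above-diagonal entry in the $j$-th column has the form $p a_{\ell j}$ with $0 \leq p a_{\ell j} < p^{e_j} = p$, forcing $a_{\ell j} = 0$. The resulting matrix is diagonal with entries $p, \ldots, p, 1$, together with a last column of all ones. Let $v_1, \ldots, v_n$ denote its columns; then $v_j$ for $j<n$ has a single nonzero entry $p$ in the $j$-th position, so $v_i \circ v_j = 0$ for distinct $i,j < n$, $v_j \circ v_j = p v_j$, $v_j \circ v_n = v_j$, and $v_n \circ v_n = v_n$. All Hadamard products lie in the column span, so $g_n(p^{n-1}) = 1$.

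For $i = n$ the compositions are exactly those with one part equal to $2$ and the rest equal to $1$; index them by the position $k \in \{1, \ldots, n-1\}$ of the $2$, writing $\alpha_k$ for the corresponding composition. As before, columns $v_j$ with $j \neq k$ and $j < n$ have their above-diagonal entries forced to $0$. In column $k$ the entries $p a_{1k}, \ldots, p a_{(k-1)k}$ must each lie in $[0, p^2)$, giving $a_{\ell k} \in \{0, 1, \ldots, p-1\}$ and therefore $p^{k-1}$ candidate matrices for this diagonal.

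The main obstacle is verifying that \emph{every} such candidate is a subring matrix, i.e.\ that the Hadamard-closure condition imposes no constraint on the $a_{\ell k}$. Most pairwise products vanish by disjoint supports, and one checks directly that $v_j \circ v_k = p a_{jk} v_j$ for $j < k$. The only delicate case is $v_k \circ v_k$, but writing $v_k$ explicitly in terms of standard basis vectors yields the identity
$$
v_k \circ v_k \;=\; p^2 v_k + \sum_{\ell < k} \bigl( p a_{\ell k}^2 - p^2 a_{\ell k} \bigr) v_\ell,
$$
so $v_k \circ v_k$ lies in the integer column span. Hence $g_{\alpha_k}(p) = p^{k-1}$, and summing over $k$ gives
$$
g_n(p^n) = \sum_{k=1}^{n-1} p^{k-1} = \frac{p^{n-1} - 1}{p - 1}.
$$
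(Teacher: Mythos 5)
Your first two parts are fine, and the overall answer comes out right, but there is a genuine error in the parametrization for the third part. The paper's Hermite normal form condition, $0 \le a_{ij} < a_{ii}$, bounds each above-diagonal entry by the pivot in its \emph{row}: the entry $p a_{\ell j}$ of an irreducible subring matrix must satisfy $0 \le p a_{\ell j} < p^{e_\ell}$, i.e., $a_{\ell j} \in [0, p^{e_\ell - 1})$. (The paper's worked example for $\alpha = (3,2)$, where the single free parameter is $a_{12} \in [0, p^2)$, is consistent with this, since $p^2 = p^{e_1 - 1}$ and $e_1=3$, whereas the column pivot would give only $[0,p)$.) You instead bound by the column pivot, writing $0 \le p a_{\ell j} < p^{e_j}$.

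With the correct convention, for a composition $\alpha_k$ with the $2$ in position $k$, the entries $p a_{\ell k}$ with $\ell < k$ all lie in rows whose diagonal is $p$ and are therefore forced to vanish; the genuinely free entries are $p a_{k,k+1}, \ldots, p a_{k,n-1}$, which sit in row $k$ (diagonal $p^2$) and each range over $[0,p)$. This yields $g_{\alpha_k}(p) = p^{n-1-k}$, not $p^{k-1}$. Your final formula is numerically correct only because the two sums agree after the reindexing $k \mapsto n-k$, but your per-composition count is wrong: for $n=3$ and $\alpha=(1,2)$ your formula gives $p$ matrices, yet the Hermite condition forces $a_{12}=0$ and there is exactly one. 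In the corrected picture the nontrivial closure check is $v_j \circ v_j \in \text{Col}(A)$ for $j > k$, with an identity analogous to the one you wrote, rather than $v_k \circ v_k$. (Note also that the paper itself cites this proposition from Liu without proof, so there is no internal proof to compare against; the above is a critique of your argument on its own terms.)
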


\begin{prop}\cite[Corollary 3.7]{akkm}\label{gn+1}
Let $n >0$. We have
\begin{align*}
	g_{n}(p^{n+1})&= \frac{1}{2(p-1)^2(p+1)}\bigg(2p^{2n-3} + (n^2-n) p^{n+1} \\
	&-(n^2-n)p^n -(n^2-n+2)p^{n-1} +(n^2-n-2) p^{n-2}+2\bigg)
\end{align*}	
For any fixed $n > 0$, this is a polynomial in $p$.
\end{prop}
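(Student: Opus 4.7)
The plan is to apply the decomposition $g_n(p^{n+1}) = \sum_{\alpha \in C_{n, n+1}} g_\alpha(p)$ and compute each term. The compositions $\alpha = (e_1, \ldots, e_{n-1})$ of $n + 1$ into $n - 1$ positive parts have excess~$2$ above the all-ones composition, so they fall into two shapes: Type~A, with a single $e_k = 3$ and all other entries equal to~$1$ (giving $n-1$ compositions, indexed by the position $k$ of the~$3$); and Type~B, with two entries $e_k = e_\ell = 2$ at positions $k < \ell$ and the rest equal to~$1$ (giving $\binom{n-1}{2}$ compositions).

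For each shape I would compute $g_\alpha(p)$ by direct inspection of the irreducible subring matrix. The Hermite bound $0 \le p a_{ij} < p^{e_i}$ forces rows with $e_i = 1$ to have all above-diagonal entries equal to zero, so the only free off-diagonal entries live in the rows indexed by the large parts: $a_{kj} \in \{0, \ldots, p^2-1\}$ in row~$k$ for Type~A, and $a_{kj}, a_{\ell j'} \in \{0, \ldots, p-1\}$ in rows $k$ and $\ell$ for Type~B. I would then impose the closure conditions $v_i \circ v_j \in \mathrm{span}_\Z(A)$. Products with the all-ones column $v_n$ are automatic, and products pairing a ``large'' column with a column of the form $p\, e_j$ contribute only trivial divisibility, so the nontrivial constraints come from the Hadamard squares and cross products of the ``large'' columns among themselves. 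In Type~A these reduce to congruence conditions modulo~$p$ on the row-$k$ entries (forcing each to be $\equiv 0$ or $\equiv 1 \pmod p$, with at most one of them $\equiv 1$). In Type~B they additionally couple $a_{k\ell}$ with the row-$\ell$ entries and must be handled by splitting cases on whether $a_{k\ell}$ vanishes modulo~$p$.

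After expressing $g_\alpha(p)$ as an explicit polynomial in $p$ for each shape (parameterized by $k$ for Type~A and by $(k, \ell)$ for Type~B), I would sum over positions and simplify the resulting double sum by standard geometric-series identities to match the closed form in the statement; polynomiality for each fixed $n$ is then manifest. The main obstacle will be the closure analysis for Type~B: the Hadamard square $v_j^{\circ 2}$ for $j > \ell$ introduces a subtle coupling between $a_{k\ell}$ and $a_{\ell j}$ that is easy to overlook but is precisely the source of the nontrivial correction term distinguishing $g_n(p^{n+1})$ from a simple sum of powers of~$p$. Once this case split is identified, the remaining enumeration and algebraic manipulation reduce to routine bookkeeping, which I would verify by checking the small cases $n = 2, 3, 4$ against the claimed formula.
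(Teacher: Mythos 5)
This proposition is quoted from \cite[Corollary 3.7]{akkm} and the paper does not reprove it; there is no ``paper's own proof'' here to compare against. What the paper does provide is a description of the \cite{akkm} method (compute $g_\alpha(p)$ for each diagonal $\alpha$, then sum), plus its own fully worked analogue for $g_n(p^{n+2})$ (Theorem \ref{n+2indexthm} together with Lemmas \ref{lemma3beta}, \ref{lemma222} and their corollaries). Your outline follows exactly this template: you correctly identify $C_{n,n+1}$ as consisting of the two shapes (one entry $3$, rest $1$; two entries $2$, rest $1$), correctly observe that the Hermite bound kills all off-diagonal entries in rows with diagonal $p$, and propose to analyze the closure conditions for the surviving entries. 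So the approach is the right one and matches the cited source.

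The gap is that the proposal stops at the point where the actual content begins. You acknowledge that the Type B closure analysis (coupling between $a_{k\ell}$ and row-$\ell$ entries through the Hadamard squares $v_j\circ v_j$ for $j>\ell$) is ``the main obstacle,'' but you do not carry it out, and this case analysis is the entire substance of the result --- everything else really is bookkeeping. You would need to produce explicit case-by-case counts analogous to the tables in the appendix proofs of Lemmas \ref{lemma3beta} and \ref{lemma222}, including the splits on whether the relevant residues are $0$, $1$, or neither, and then verify that the sum over $k$ and over $k<\ell$ telescopes to the stated rational expression. One structural simplification worth adopting from the paper's own proof of Theorem \ref{n+2indexthm}: rather than summing $g_\alpha(p)$ over \emph{all} compositions and tracking the position of the leading block of $1$'s, use \cite[Lemma 3.4]{akkm} (that $g_{(1,\alpha')}(p)=g_{\alpha'}(p)$) to fold the compositions starting with $1$ into $g_{n-1}(p^{n})$ and induct on $n$. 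This cuts the number of cases you must analyze to those $\alpha$ with $\alpha_1>1$, which is exactly what Lemmas \ref{lemma2beta}--\ref{lemma222} are set up to handle.
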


\subsection{Main Results}
In this paper, we provide evidence that for each fixed $n$ and $e$, the function $f_n(p^e)$ may be polynomial in $p$. In fact, we consider something stronger, namely that $g_\alpha(p)$ may be polynomial for each $\alpha \in C_{n,e}$. 

First, we show that the number of solutions to one condition $v_i \circ v_j$ being in the $\Z$-column span of a matrix $A$ of the form
$$
A = \begin{pmatrix}
	p^{e_1} & pa_{12} & pa_{13} & \cdots & pa_{1(n-1)}&1 \\
	& p^{e_2} & pa_{23} & \cdots &pa_{2(n-1)} &1\\
	&& p^{e_3} & \cdots & pa_{3(n-1)} &1\\
	&&& \ddots & \vdots& \vdots\\
	&&&&p^{e_{n-1}} &1\\
	&&&&&1
\end{pmatrix}
$$
is polynomial. Recall that $g_\alpha(p)$ can be understood by counting  the set of solutions so that $v_i \circ v_j \in \text{Col}(A)$ for all $1 \le i \le j \le n$. Thus we will show that the set of solutions so that a single $v_i \circ v_j \in \text{Col}(A)$ is a polynomial in $p$. We will then provide an example that shows the number of solutions to $v_i \circ v_j, v_k \circ v_{\ell} \in \text{Col}(A)$ need not be polynomial. While this proves that pieces of the counting function $g_\alpha(p)$ can be non-polynomial, it does not mean $g_\alpha(p)$ is non-polynomial.

Second, we give an explicit formula for $g_n(p^{n+2})$, which is a polynomial in $p$; see Theorem \ref{n+2indexthm}. We do so by considering the set of all compositions $\alpha$ of $n+2$ into $n-1$ parts. The only possible $\alpha$ are those which are permutations of the multisets $\{4, 1, \ldots, 1\}$ , $\{3, 2, 1, \ldots, 1\}$, or $\{2, 2,2, 1, \ldots, 1\}$ with cardinality $n-1$. We show that for each such $\alpha$, $g_\alpha(p)$ is a polynomial in $p$. Some of these cases were previously known by Atanasov et al. \cite{akkm}. Even more, we show that if $\alpha$ is a permutation of the multiset $\{3, \beta, 1, \ldots, 1\}$ with cardinality $n-1$ and $\beta > 0$, then $g_\alpha(p)$ is polynomial in $p$. These results also give a better understanding of $g_n(p^e)$ for $e > n+2$; in particular, we find some pieces in the formula for $g_n(p^e)$ which are polynomial. 

This seems to suggest that if $g_\alpha(p)$ is not a polynomial in $p$, then the composition $\alpha$ is fairly complicated. In particular, $\alpha$ likely contains more than two entries that are not equal to 1. The more complicated $\alpha$ is, the less likely we will be able to use computational methods like those developed in \cite{ish_subrings} and summarized in Section \ref{background}.

While we show evidence that $f_n(p^e)$ may be polynomial, it is still likely that this function will not be. In Section \ref{background}, we show that we can compute $f_n(p^e)$ by computing the solutions to a system of polynomials modulo various powers of $p$. It is certainly possible that the set of solutions to these systems of polynomials fit within the philosophy of Mn\"ev's Universality Theorem \cite{mnev} or Vakil's Murphy's Law in algebraic geometry \cite{vakil}, and thus as $n$ and $e$ grow large, these solution sets could grow arbitrarily complicated. In Example \ref{maybe_qp_ex}, we show that some of the pieces involved in the counting function $f_n(p^e)$ may not be polynomial.

\section{Background}\label{background}
Atanasov et al. determine $f_n(p^e)$ for $e \le 8$ by explicitly writing down all of the multiplicative closure conditions for irreducible subring matrices with fixed diagonal $(p^{e_1}, \ldots, p^{e_{n-1}}, 1)$ such that $e_1 + \ldots + e_{n-1} \le 8.$ They determine $g_n(p^{n+1})$ similarly by considering fixed diagonals that come from compositions of $n+1$ into $n-1$ parts. This is often tedious and does not scale well as $e$ and $n$ grow. In previous work \cite{ish_subrings}, we establish a new technique for understanding the closure conditions via linear algebra. We show that $v_i \circ v_j \in \text{Col}(A)$ if and only if $A \vec{x}^T= v_i \circ v_j$ for some $\vec{x} \in \Z^n$. Thus we can understand the condition $v_i \circ v_j \in \text{Col}(A)$ by setting up the augmented matrix $[A \, v_i \circ v_j]$ and row reducing. It is natural to assume the row reduction will be complicated since we are row reducing an integer matrix over $\Z$; however we demonstrate that after dividing each row by $p^{e_i}$, the remaining row reduction steps amount to adding an integer multiple of one row to another. Thus in order to count $g_\alpha(p)$, we must count the number of simultaneous solutions so that the last column of the echelon form of $[A \; v_i \circ v_j]$ (considered over $\Q$) contains only integer entries. This is equivalent to counting the number of simultaneous solutions to the vanishing of several polynomials modulo powers of $p$. See \cite[Section 3]{ish_subrings} for more details. 
We summarize this in the following proposition.

\begin{prop}
	Fix $\alpha=(e_1, \ldots, e_{n-1})$ and let $0\le a_{ij}< p^{e_{i}}$ for each $1\le i < j <n$. Let $\vec{a}_{ij} = (a_{12},a_{13}, \ldots, a_{1(n-1)}, a_{23}, \ldots, a_{(n-2)(n-1)})$. There exist finitely many polynomials $h_1(\vec{a}_{ij}), \ldots, h_{\ell}(\vec{a}_{ij})$ and finitely many positive integers $r_1, \ldots, r_\ell$ so that $g_\alpha(p)$ is equal to the number of solutions to the system $h_1(\vec{a}_{ij}) \equiv 0 \pmod{p^{r_1}}, \ldots, h_\ell(\vec{a}_{ij}) \equiv 0 \pmod{p^{r_\ell}}$.
\end{prop}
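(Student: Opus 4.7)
The plan is to formalize the row-reduction procedure sketched in Section \ref{background} into an explicit system of polynomial congruences. Since $\alpha=(e_1,\ldots,e_{n-1})$ is fixed, an irreducible subring matrix with diagonal $(p^{e_1},\ldots,p^{e_{n-1}},1)$ is determined by the $\binom{n-1}{2}$ free entries $a_{ij}$ with $1\le i<j<n$, and Hermite normal form forces $0\le a_{ij}<p^{e_i}$. Thus $g_\alpha(p)$ counts those tuples $\vec{a}_{ij}$ in this box that additionally satisfy the multiplicative closure conditions $v_i\circ v_j\in\mathrm{Col}(A)$ for all $1\le i\le j\le n$, together with the condition that $(1,\ldots,1)^T\in\mathrm{Col}(A)$, which is automatic from the last column of $A$.

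Next, I would translate each closure condition into congruences on the $a_{ij}$. Fix a pair $(i,j)$. Because $A$ is upper triangular with unit last diagonal entry, the linear system $A\vec{x}=v_i\circ v_j$ has a unique solution $\vec{x}\in\mathbb{Q}^n$ obtained by back-substitution: $x_n=(v_i\circ v_j)_n\in\mathbb{Z}$, and for $k<n$, $x_k$ is the quotient by $p^{e_k}$ of a $\mathbb{Z}$-linear combination of $x_{k+1},\ldots,x_n$ and the entries $pa_{k,k+1},\ldots,pa_{k,n-1},1$. The condition $v_i\circ v_j\in\mathrm{Col}(A)$ is equivalent to $\vec{x}\in\mathbb{Z}^n$, so clearing the single denominator at row $k$ produces a polynomial $h_{i,j,k}(\vec{a}_{ij})\in\mathbb{Z}[\vec{a}_{ij}]$ whose vanishing modulo $p^{e_k}$ is equivalent to $x_k\in\mathbb{Z}$ (assuming integrality of $x_{k+1},\ldots,x_n$ has already been imposed).

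Collecting these over all pairs $(i,j)$ with $1\le i\le j\le n$ and all rows $k=1,\ldots,n-1$ yields a finite list of polynomial congruences $h_1(\vec{a}_{ij})\equiv 0\pmod{p^{r_1}},\ldots,h_\ell(\vec{a}_{ij})\equiv 0\pmod{p^{r_\ell}}$, each $r_\ell$ lying in $\{e_1,\ldots,e_{n-1}\}$, whose simultaneous solutions in $\prod_{1\le i<j<n}[0,p^{e_i})$ are in bijection with the irreducible subring matrices of diagonal $\alpha$. The main technical step I expect to have to carry out carefully is verifying that the back-substitution never inflates the denominator beyond the single factor $p^{e_k}$ introduced at row $k$, so that each $h_{i,j,k}$ is a genuine polynomial rather than merely a rational function in the $a_{ij}$; this is exactly the content of the observation in \cite[Section 3]{ish_subrings} that, after the initial division of row $k$ by $p^{e_k}$, subsequent row operations only add integer multiples of rows. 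Granted this, the proposition is immediate: $g_\alpha(p)$ equals the cardinality of the zero locus of the listed congruence system.
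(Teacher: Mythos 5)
Your overall approach matches the paper's: parametrize irreducible subring matrices with diagonal $\alpha$ by the box of entries $a_{ij}\in[0,p^{e_i})$, express each closure condition $v_i\circ v_j\in\mathrm{Col}(A)$ as integrality of the unique rational solution of $A\vec{x}=v_i\circ v_j$ obtained by back-substitution, and convert integrality into polynomial congruences. That is exactly the row-reduction idea from the cited previous work that the paper summarizes. However, the specific technical claim you lean on is false, and it is also not what \cite[Section~3]{ish\_subrings} asserts. In the back-substitution, $x_k$ is not the quotient by $p^{e_k}$ alone of a $\Z$-linear combination of polynomials in $\vec{a}_{ij}$: the terms $p\,a_{k,j}x_j$ for $k<j<n$ carry the accumulated denominators from $x_{k+1},\ldots,x_{n-1}$. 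Writing $x_j = N_j(\vec{a})/p^{M_j}$ in lowest terms, one finds that $x_k = N_k(\vec{a})/p^{M_k}$ where $M_k$ can be as large as $e_k+e_{k+1}+\cdots+e_{n-1}$ minus whatever powers of $p$ factor identically out of the numerator. Concretely, repeating the computation in the proof of Proposition~\ref{vivi\_poly}, the second back-substitution step for $v_i\circ v_i$ gives a condition modulo $p^{e_{i-2}+e_{i-1}-3}$, not modulo $p^{e_{i-2}}$; for instance with $(e_{i-2},e_{i-1})=(2,4)$ the modulus is $p^3$. So your assertions that each $r_\ell\in\{e_1,\ldots,e_{n-1}\}$ and that ``the back-substitution never inflates the denominator beyond the single factor $p^{e_k}$'' do not hold. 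What the cited observation actually says is that the integer row operations after the initial divisions remain integral, which controls the \emph{form} of the rational entries but does not bound the denominator by $p^{e_k}$.

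This is a fixable misstep rather than a fatal one, because the Proposition only requires that the moduli be \emph{some} finitely many positive integers, not that they lie in $\{e_1,\ldots,e_{n-1}\}$. The correct argument runs: by induction on $k$ from $n$ down to $1$, each $x_k$ is a rational function of $\vec{a}_{ij}$ of the form $N_k(\vec{a}_{ij})/p^{M_k}$, where $N_k\in\Z[\vec{a}_{ij}]$ and $M_k\in\Z_{\ge 0}$ depend only on $\alpha$ and the pair $(i,j)$ (this is immediate since $x_n=b_n$ is a monomial and the back-substitution multiplies by integers $p\,a_{k,j}$ and divides by $p^{e_k}$, so clearing all accumulated denominators leaves a polynomial over a fixed power of $p$). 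Then $x_k\in\Z$ is equivalent to the polynomial congruence $N_k(\vec{a}_{ij})\equiv 0\pmod{p^{M_k}}$, where one discards the conditions with $M_k\le 0$ as vacuous. Collecting over all $1\le i\le j\le n-1$ and all $k$ gives the required finite system. You should replace the denominator-control claim with this clearing-of-denominators argument, and drop the assertion that $r_\ell\in\{e_1,\ldots,e_{n-1}\}$.
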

\begin{example}
	Consider the diagonal $\alpha = (3,2)$. We set up the matrix
	$$
	\begin{pmatrix}
		p^3 & pa_{12} & 1\\
		&p^2&1\\
		&&1
	\end{pmatrix}
	$$
	where $a_{12} \in [0, p^2)$. We must determine conditions on the variable $a_{12}$ so that this matrix is an irreducible subring matrix. We do so by computing $v_i \circ v_j$ for each $1 \le i \le j \le 3$ and asking what conditions there are on $a_{12}$ so that $v_i \circ v_j \in \text{Col}(A)$.  
	
	For each $1\le i \le 3$, we can write $v_i \circ v_3  = v_3$, so $v_i \circ v_3\in \text{Col}(A)$. When we compute $v_1 \circ v_1$ and $ v_1 \circ v_2$ and solve $A\vec{x}= v_1 \circ v_1$ or $A \vec{y} = v_1 \circ v_2$, we find that the solutions $\vec{x}$ and $\vec{y}$ are vectors in $\Z^3$. Since these are vectors with integral entries, then $v_1 \circ v_1$ and $v_1 \circ v_2$ must be in the $\Z$-column span of $A$.
	
	 The only remaining product to consider is $v_2 \circ v_2.$  Following the method summarized above, we set up the augmented matrix
		$$
	\begin{pmatrix}
		p^3 & pa_{12} & p^2a_{12}^2\\
		&p^2&p^4\\
		&&0
	\end{pmatrix}.
	$$
	Notice that we can omit the $(1,1,1)^T$ column since $v_2 \circ v_2$ has a 0 in the third entry. Then we row reduce, obtaining the matrix in echelon form (over $\Q$)
	$$
	\begin{pmatrix}
		1 &0 &  \frac{a_{12}^2}{p}- pa_{12}\\
		&1&p^2\\
		&&0
	\end{pmatrix}.
	$$
  
	Thus $v_2 \circ v_2 \in \text{Col}(A)$ if and only if $\frac{a_{12}^2}{p} \in\Z$. This gives the condition $a_{12}^2 \equiv 0 \pmod{p}$, so we require $a_{12} = mp$ for any $0 \le m < p$. Thus $g_\alpha(p) =  p.$
\end{example}

\section{There are a polynomial number of solutions to a single $v_i \circ v_j \in \text{Col}(A)$}
\begin{prop}\label{vivi_poly}
	Consider 	$$
	A = \begin{pmatrix}
		p^{e_1} & pa_{12} & pa_{13} & \cdots & pa_{1(n-1)} &1\\
		& p^{e_2} & pa_{23} & \cdots &pa_{2(n-1)}&1 \\
		&& p^{e_3} & \cdots & pa_{3(n-1)}&1 \\
		&&& \ddots & & \vdots\\
		&&&&p^{e_{n-1}}&1 \\
		&&&&&1
	\end{pmatrix}
	$$
	where $e_i >0$ for each $1 \le i<n$ and the columns are labeled $v_1, \ldots, v_n$. There are a polynomial number of solutions $\{a_{rs} \, :\, 1 \le r < s\le i \}$ so that $v_i \circ v_i \in \text{Col}(A)$.
\end{prop}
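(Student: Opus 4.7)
The plan is to exploit the triangular structure of $A$ via back-substitution, reduce to a system of polynomial congruences, and count the solutions by downward induction.

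First, I would observe that since $v_i \circ v_i$ has zero entries in positions $i+1, \ldots, n$ and $A$ is upper triangular, any $\Q$-solution $\vec{x}$ to $A\vec{x} = v_i \circ v_i$ must satisfy $x_k = 0$ for all $k > i$. Using the fact that $v_i \in \text{Col}(A)$ (as $v_i$ is literally the $i$-th column) together with the identity $v_i \circ v_i = v_i \circ (v_i - p^{e_i}\vec{1}) + p^{e_i} v_i$, the condition is equivalent to $\vec{w} \in \text{Col}(A^{(i-1)})$, where $A^{(i-1)}$ is the top-left $(i-1) \times (i-1)$ block and $\vec{w}_j = p^2 a_{ji}(a_{ji} - p^{e_i - 1})$ for $j < i$. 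The useful feature here is that the ``last-column'' variables $a_{1i}, \ldots, a_{(i-1)i}$ appear only in $\vec{w}$ (each in a single component), while the ``inner'' variables $a_{jk}$ with $j < k < i$ appear only in $A^{(i-1)}$.

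Next, I would back-substitute on $A^{(i-1)} \vec{y} = \vec{w}$ to obtain, for $j = i-1, i-2, \ldots, 1$, the integrality condition
$$ C_j : \quad p^{e_j} \,\Big|\, p^2 a_{ji}(a_{ji} - p^{e_i - 1}) - p \sum_{k=j+1}^{i-1} a_{jk} y_k, $$
where $y_k$ denotes the back-substituted value at row $k$. Each variable $a_{jk}$ with $j < k \le i$ appears in exactly one condition $C_j$, so the system has a natural triangular shape amenable to downward induction on $j$. At level $j$, fixing a valid tuple $(a_{rs})_{r > j}$ satisfying $C_{j+1}, \ldots, C_{i-1}$ determines integer values $y_{j+1}, \ldots, y_{i-1}$, and then $C_j$ becomes a polynomial congruence modulo $p^{e_j}$ in $a_{j, j+1}, \ldots, a_{j, i-1}, a_{ji}$: linear in the first $i - j - 1$ of these and quadratic in $a_{ji}$. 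Summing first over the linear variables (via standard counting for linear congruences, which contributes a factor of $\gcd(y_{j+1}, \ldots, y_{i-1}, p^{e_j})$ when the divisibility condition on the quadratic part is met) and then over $a_{ji}$ by case analysis on $v_p(a_{ji})$ shows the count at level $j$ is polynomial in $p$, with coefficients depending only on the $p$-adic valuations $v_p(y_k)$ for $k > j$.

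The hard part will be making the induction carry through cleanly. I would strengthen the hypothesis to track not just the total number of valid higher-level tuples but the number stratified by each pattern of valuations $(v_p(y_k))_{k>j}$. Since each $v_p(y_k)$ is bounded by a constant depending only on the $e_\ell$'s and not on $p$, only finitely many such patterns occur, so summing a product of polynomials in $p$ over these finitely many patterns yields a polynomial total and completes the argument.
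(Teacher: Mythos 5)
Your proposal follows essentially the same strategy as the paper's proof: exploit the upper-triangular structure to obtain one divisibility condition per row (your $C_j$, the paper's $D_{i-j} \in \Z$), with each new variable appearing in exactly one condition, and strengthen the induction to control the $p$-adic valuations of the back-substituted quantities (your stratification by $(v_p(y_k))_{k>j}$, the paper's claims that $D_{i-k} \in p^\ell \Z$ has polynomially many solutions for each fixed $\ell$). The reduction to $\text{Col}(A^{(i-1)})$ via the identity with the all-ones vector is a minor streamlining of the paper's direct row reduction of the $i \times (i+1)$ augmented matrix, and the resulting congruences are identical.
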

\begin{proof}
	Take some column $v_i$ and consider $v_i \circ v_i$. Since column $v_j$ has a 0 in the $i^{th}$ entry whenever $j > i$, we need only consider the simpler matrix
	$$\begin{pmatrix}
		p^{e_1} & pa_{12} & pa_{13} & \cdots & pa_{1i} &p^2a_{1i}^2\\
		& p^{e_2} & pa_{23} & \cdots &pa_{2i}&p^2a_{2i}^2 \\
		&& p^{e_3} & \cdots & pa_{3i}&p^2a_{3i}^2\\
		&&& \ddots & & \vdots\\
		&&&&p^{e_i}&p^{2e_i} 
	\end{pmatrix}.
	$$
	 We use the row reduction method to understand solutions to $v_i \circ v_i \in \text{Col}(A)$. The idea is that $v_i \circ v_i \in \text{Col}(A)$ if and only if the last column of this simpler matrix has integer coefficients after row reducing.
	 
	 Notice that the condition coming from the $(i-1)^{th}$ entry in the last column depends only on $a_{(i-1)i}$. So we will solve first for variable $a_{(i-1)i}$. Then after fixing a choice of $a_{(i-1)i}$, the $(i-2)^{th}$ entry in the last column only depends on $a_{(i-2)i}$ and $a_{(i-2)(i-1)}$. So we can find restrictions on $a_{(i-2)i}$ and $a_{(i-2)(i-1)}$ using this second condition. The main idea is that at each step, we can substitute in our choice for the previous variables and then solve for a small amount of new variables.  \\
	
	The first row reduction step shows that $v_i \circ v_i \in \text{Col}(A)$ implies
	$$
	D_{i-1} = \frac{p^2a_{(i-1)i}^2-p^{e_i+1}a_{(i-1)i}}{p^{e_{i-1}}} \in \Z.
	$$
	Note that for each fixed $p$, $D_{i-1}$ is a function of the variable $a_{(i-1)i}$, but we are suppressing this dependence for clarity.
	
	We want to show that $D_{i-1} \in \Z$ has a polynomial number of solutions for the variable $a_{(i-1)i}$. However, we will need a slightly stronger result to continue this argument. Namely, we need to show that for each $\ell$, $D_{i-1} \in p^{\ell} \Z$ has a polynomial number of solutions. We will explain why this strengthened condition is necessary later on.\\
	
	\noindent \textbf{Claim:} Fix diagonal $(e_1, \ldots, e_{n-1})$ and integer $\ell \ge 0$. The expression $D_{i-1} \in p^{\ell} \Z$ has a polynomial number of solutions for $a_{(i-1)i}.$\\
	\begin{proof}[Proof of Claim]
		We can rewrite $D_{i-1} \in p^{\ell}\Z$ as 
		$$	\frac{p^2a_{(i-1)i}^2-p^{e_{i}+1}a_{(i-1)i}}{p^{e_{i-1}+\ell}} \in \Z.$$
		
		If $e_{i-1}+\ell \le 2$, this condition holds for all possible choices of $a_{(i-1)i}$. Otherwise, we can simplify this to \begin{equation}\label{cond1} p^{e_{i-1}+\ell-2} \mid \left(a_{(i-1)i}^2 - p^{e_i-1}a_{(i-1)i}\right).
		\end{equation}
		
		If $e_i=1$, we get $p^{e_{i-1}+\ell-2} \mid a_{(i-1)i}(a_{(i-1)i}-1)$. There are a polynomial number of solutions in this case. 
		
		Now suppose $e_i >1$. Observe that \eqref{cond1} implies $p \mid a_{(i-1)i}$. Write $a_{(i-1)i} = pd$ where $d$ must be in $[0, p^{e_{i-1}-2})$. If $e_{i-1}+\ell \le 4$, the condition holds for all choices of $d \in [0, p^{e_{i-1}-2})$. If $e_i = 2$, then we solve $p^{e_{i-1}+\ell-4} \mid d(d-1)$, which has a polynomial number of solutions. Otherwise $p^{e_{i-1}+\ell-4} \mid (d^2 - p^{e_i-2}d)$ implies $p \mid d$. We can continue by setting $d = pd^\prime$ and showing that either the the denominator divides the numerator evenly, $p^{e_{i-1}+\ell-6} \mid d^\prime (d^\prime -1)$, or $p \mid d^\prime$. 
		
		We continue until we are in one of the first two cases or until we find $p^{e_{i-1}} \mid a_{(i-1)i}$ implying that $a_{(i-1)i} =0$. In either case, we find a polynomial number of values for $a_{(i-1)i}$ that satisfy $D_{i-1} \in p^\ell \Z$. Notice that the value of $e_{i-1}, e_i$, and $\ell$ determine when this process stops, so for each fixed $e_{i-1}, e_i,$ and $\ell$, there are a polynomial number of choices for $a_{(i-1)i} \in [0, p^{e_{i-1}-1}).$
	\end{proof}

	\noindent The next row reduction step shows that $v_i \circ v_i \in \text{Col}(A)$ implies
	$$
	D_{i-2} = \frac{p^2a_{(i-2)i}^2- p^{e_{i-1}+1}a_{(i-2)i} - pa_{(i-2)(i-1)}D_{i-1}}{p^{e_{i-2}}} \in \Z.
	$$
Let $\nu_p(x)$ be the largest non-negative integer $k$ so that $p^k \mid x$. Fix some choice of $a_{(i-1)i}$ satisfying the $D_{i-1}(a_{(i-1)i}) \in \Z$ and let $D_{i-1}$ denote $D_{i-1}(a_{(i-1)i})$. 

If $D_{i-1} = 0$, we can solve for $a_{(i-2)i}$ as in the first step.  Otherwise, write $\ell = \nu_p(D_{i-1})$ and set $D_{i-1} = p^{\ell} d$. Notice that since we are fixing the valuation of $D_{i-1}$, we need to know that there are a polynomial number of solutions to $D_{i-1} \in \Z$ and $v_p(D_{i-1})  = \ell$, hence our stronger claim above.
	
	As before, we need to prove a slightly stronger claim.\\
	
	\noindent \textbf{Claim:} Fix diagonal $(e_1, \ldots e_{n-1})$ and integer $m \ge 0$. The condition $D_{i-2} \in p^m \Z$ has a polynomial number of solutions for $\{a_{(i-2)i}, a_{(i-2)(i-1)}\}$.
	\begin{proof}[Proof of Claim]
		The condition $D_{i-2} \in p^m\Z$ is the same as
		$$
		D_{i-2} = \frac{p^2a_{(i-2)i}^2- p^{e_{i-1}+1}a_{(i-2)i} - pa_{(i-2)(i-1)}D_{i-1}}{p^{e_{i-2}+m}} \in \Z
		$$
		Assume $e_{i-2} +m> 1$ as otherwise, this condition is trivial.
		
		Recall that $v_p(D_{i-1}) = \ell. $ If $\ell= 0$, the condition simplifies to
		$$
		D_{i-2} = \frac{pa_{(i-2)i}^2- p^{e_{i-1}}a_{(i-2)i} - a_{(i-2)(i-1)}d}{p^{e_{i-2}+m-1}} \in \Z.
		$$
		In this case, note that $$a_{(i-2)(i-1)} \equiv d^{-1} (pa_{(i-2)i}^2 - p^{e_{i-1}}a_{(i-2)i}) \pmod{p^{e_{i-2}+m-1}}.$$ 
		
		If $\ell > 0$ and $e_{i-2}+m =2,$ the condition is trivial. Otherwise, $\ell > 0, e_{i-2}+m>2$, and the condition simplifies to
	\begin{equation}\label{cond2}
		D_{i-2} = \frac{a_{(i-2)i}^2- p^{e_{i-1}-1}a_{(i-2)i} - p^{\ell-1}a_{(i-2)(i-1)}d}{p^{e_{i-2}+m-2}} \in \Z.
		\end{equation}
		Write $f(a_{(i-2)i}) = a_{(i-2)i}^2- p^{e_{i-1}-1}a_{(i-2)i}$. If $e_{i-2}+m \le \ell+1$, then we simply solve $p^{e_{i-2}+m-2} \mid f(a_{(i-2)i})$, which is similar to before. Otherwise, notice that \eqref{cond2} implies $p^{\ell-1} \mid f(a_{(i-2)i})$. Write $\tilde{f}(a_{(i-2)i}) p^{\ell-1} = f(a_{(i-2)i})$. Then we simply solve
		$$
		\tilde{f}(a_{(i-2)i}) \equiv a_{(i-2)(i-1)}d \pmod{p^{e_{i-2}+m-\ell-1}}.
		$$
		Since $(d,p) =1$, we can multiply by $d^{-1}$, and solve for the variable	$a_{(i-2)(i-1)}$. 
		
		In any of these cases, we find a polynomial number of solutions to $D_{i-2} \in p^m\Z$.
	\end{proof}
	
	Continue inductively. The general step for $j < i$ is of the form
	$$
	D_{i-j} = \frac{p^2a_{(i-j)i}^2- p^{e_{i-j+1}+1}a_{(i-j)i} - p\sum_{k=1}^{j-1} a_{(i-j)(i-k)}D_{i-k}}{p^{e_{i-j}}} \in \Z
	$$
	
	For each fixed diagonal $(e_1, \ldots, e_{n-1})$ and integer $  m \ge 0$, we must show that $D_{i-j} \in p^m\Z$ has a polynomial number of solutions for $\{a_{(i-j)(i-k)}\, :\, 0 \le k < j\}$.
	
	Suppose that $D_{i-r} \ne 0$ for some $r \in [1,j-1]$. Let $\ell = \nu_p(D_{i-r})$ and write $D_{i-r} = p^\ell d$. 
	
	As before, there are different cases when $\ell =0$ and when $\ell > 0$. 
	
	If $\ell =0$, the condition reduces to 
	$$
	D_{i-j} = \frac{pa_{(i-j)i}^2- p^{e_{i-j+1}}a_{(i-j)i} -  a_{(i-j)(i-r)} d - \sum_{k \ne r} a_{(i-j)(i-k)}D_{i-k}}{p^{e_{i-j}+m-1}} \in \Z
	$$
	As before, we can simply solve for $a_{(i-j)(i-r)}$.\\
	
	If $\ell > 0$ and $e_{i-j}+m =1$, the condition is trivial. Otherwise, $\ell > 0$ and $e_{i-j}+m > 1$. Rewriting the condition gives
	$$
	D_{i-j} = \frac{pa_{(i-j)i}^2- p^{e_{i-j}}a_{(i-j)i} - p^{\ell} a_{(i-j)(i-r)} d - \sum_{k \ne r} a_{(i-j)(i-k)}D_{i-k}}{p^{e_{i-j}+m-1}} \in \Z
	$$
	
	If $1+ \ell \ge e_{i-j}+m$, then we simply solve the rest by induction. Otherwise, write
	$$
	f =p a_{(i-j)i}^2-  p^{e_{i-j}}a_{(i-j)i}  -\sum_{k \ne r} a_{(i-j)(i-k)}D_{i-k}
	$$
	and notice that $p^{\ell} \mid f$. Thus we can write $\tilde{f} p^{\ell}= f$ and solve
	$$
	a_{(i-j)(i-r)}  \equiv d^{-1}\tilde{f} \pmod{p^{e_{i-j}+m-1-\ell}}.
	$$
	There are a polynomial number of solutions in this case.\\
	
	Otherwise, all $D_{i-k} =0$, so we simply need to solve
	$$
	\frac{p^2a_{(i-j)i}^2- p^{e_{i-j+1}}a_{(i-j)i}}{p^{e_{i-j}}} \in p^m \Z,
	$$
	which has a polynomial number of solutions as seen in the case $D_{i-1} \in p^m\Z$.
\end{proof}

\vspace{.05in}
\begin{prop}
Let $i < j$ and consider $$
A = \begin{pmatrix}
	p^{e_1} & pa_{12} & pa_{13} & \cdots & pa_{1(n-1)} &1\\
	& p^{e_2} & pa_{23} & \cdots &pa_{2(n-1)}&1 \\
	&& p^{e_3} & \cdots & pa_{3(n-1)}&1 \\
	&&& \ddots & & \vdots\\
	&&&&p^{e_{n-1}}&1 \\
	&&&&&1
\end{pmatrix}
$$
where $e_i >0$ for each $1 \le i <n$ and the columns are labeled $v_1, \ldots, v_n$. There are a polynomial number of solutions $\{a_{rs}\,: \, 1\le r < s \le i\} \cup \{a_{rj} \, : \, 1 \le r \le i\}$ so that $v_i \circ v_j \in \text{Col}(A)$.
\end{prop}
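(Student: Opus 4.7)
The plan is to adapt the row-reduction argument of Proposition 3.1 to this mixed-column setting, where each row now contributes two ``new'' variables instead of one. Since $v_i \circ v_j$ has zeros in rows $i+1, \ldots, n$, the bottom row of the augmented system $A\vec{x} = v_i \circ v_j$ forces $x_n = 0$, and back-substitution through rows $n-1, \ldots, i+1$ (each of which becomes $p^{e_r} x_r = 0$) forces $x_{i+1} = \cdots = x_n = 0$. I would thus reduce to the $i \times i$ upper-triangular system whose right-hand side is $(p^2 a_{1i}a_{1j}, \, \ldots, \, p^2 a_{(i-1)i}a_{(i-1)j}, \, p^{e_i+1} a_{ij})^T$. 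Solving from the bottom up, row $i$ gives $x_i = pa_{ij}$, which is always integral, so $a_{ij}$ enters as a free variable contributing $p^{e_i-1}$ choices. For $k \ge 1$, after substituting the already-determined $x_{i-k+1}, \ldots, x_i$ and using $x_i = pa_{ij}$ to collapse the $s = i$ term, the row $(i-k)$ condition $x_{i-k} \in \Z$ becomes
\begin{equation*}
D_{i-k} := \frac{p^2 a_{(i-k)i}\bigl(a_{(i-k)j} - a_{ij}\bigr) - p\sum_{r=1}^{k-1} a_{(i-k)(i-k+r)}\, x_{i-k+r}}{p^{e_{i-k}}} \in \Z.
\end{equation*}
The new variables introduced at this step are $a_{(i-k)i}$, $a_{(i-k)j}$, and $a_{(i-k)(i-k+r)}$ for $r = 1, \ldots, k-1$.

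I would then prove, by induction on $k$, the strengthened claim that for every fixed diagonal $(e_1, \ldots, e_{n-1})$ and every integer $m \ge 0$, the condition $D_{i-k} \in p^m \Z$ has a polynomial (in $p$) number of solutions in the new variables, uniformly in the previously fixed ones. As in Proposition 3.1, strengthening to $p^m \Z$ is required because at subsequent rows the $p$-adic valuation of $x_{i-k}$, which is governed by the integer value of $D_{i-k}$, determines how one counts further. The inductive step splits on whether some previously determined $x_{i-k+r}$ has small $p$-valuation $\ell = \nu_p(x_{i-k+r})$: if so, after factoring out $p^\ell$ and the associated unit, one gets a linear congruence that pins down $a_{(i-k)(i-k+r)}$ modulo $p^{e_{i-k}+m-1-\ell}$, giving a polynomial count. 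Otherwise, all such $x_{i-k+r}$ vanish or have large valuation, and the condition reduces to $p^{e_{i-k}+m-2} \mid a_{(i-k)i}(a_{(i-k)j} - a_{ij})$ (plus trivial boundary cases); stratifying by $\nu = \nu_p(a_{(i-k)i})$ converts this into a linear congruence on $a_{(i-k)j}$ modulo $p^{e_{i-k}+m-2-\nu}$, again polynomial. Summing the polynomial counts across the finitely many valuation strata preserves polynomiality.

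The main obstacle I anticipate is bookkeeping rather than any fundamentally new phenomenon: unlike Proposition 3.1, each row now contributes the pair $a_{(i-k)i}, a_{(i-k)j}$ and a bilinear coupling $a_{(i-k)i}(a_{(i-k)j} - a_{ij})$, so one must carefully coordinate the valuation stratification of $a_{(i-k)i}$ with the choice of which $a$-variable absorbs the residue. Conceptually, having two fresh variables per row should make the argument easier, not harder, because there is additional flexibility for absorbing the congruence; accordingly, I expect no new tool will be needed beyond the strengthened inductive hypothesis and a case analysis that closely mirrors the one in the proof of Proposition 3.1.
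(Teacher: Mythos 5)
Your proposal is correct and follows essentially the same route as the paper: reduce to the top $i$ rows, back-substitute to get the recursion $D_{i-k}$, prove the strengthened inductive claim that $D_{i-k}\in p^m\Z$ has polynomially many solutions in the new variables, and split cases according to the $p$-adic valuations of the previously computed $x_{i-k+r}$ (the paper's $D_{i-k+r}$) to absorb the congruence into a linear condition on one variable. Your closed-form expression for $D_{i-k}$ agrees with the paper's $D_{i-1}$ and $D_{i-2}$, and your observation that the bilinear term $a_{(i-k)i}(a_{(i-k)j}-a_{ij})$ requires stratifying on $\nu_p(a_{(i-k)i})$ before solving for $a_{(i-k)j}$ is exactly how the paper handles the first row reduction step.
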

\begin{proof}
	Take a pair of columns $v_i, v_j$ with $i < j$ and consider $v_i \circ v_j$. We can set up the simpler augmented matrix for the same reasons given in Proposition \ref{vivi_poly}. The simpler matrix is
	$$\begin{pmatrix}
		p^{e_1} & pa_{12} & pa_{13} & \cdots & pa_{1i} &p^2a_{1i}a_{1j}\\
		& p^{e_2} & pa_{23} & \cdots &pa_{2i}&p^2a_{2i}a_{2j} \\
		&& p^{e_3} & \cdots & pa_{3i}&p^2a_{3i}a_{3j}\\
		&&& \ddots & & \vdots\\
		&&&&p^{e_i}&p^{e_i+1}a_{ij}
	\end{pmatrix}.
	$$
As before, we consider the first row reduction step, which gives a condition that only depends on a few variables. We find
	$$
	D_{i-1} = \frac{p^2a_{(i-1)i} a_{(i-1)j} - p^2a_{(i-1)i}a_{ij}}{p^{e_{i-1}}} \in \Z
	$$
	which simplifies to
	\begin{equation} \label{ij_cond1}
	\frac{a_{(i-1)i} a_{(i-1)j} -a_{(i-1)i}a_{ij}}{p^{e_{i-1}-2}} \in \Z
	\end{equation}
	We note that $D_{i-1}$ depends on the variables $a_{(i-1)i}, a_{(i-1)j},$ and $ a_{ij}$  but we suppress this in the notation for clarity.
	
	As in Proposition \ref{vivi_poly}, we need to show that for each fixed diagonal $(e_1, \ldots , e_{n-1})$ and integer $\ell \ge 0$, there are a polynomial number of solutions to $D_{i-1} \in p^\ell \Z$.
	
	This is clearly true for all choices of variables if $e_{i-1}+\ell \le 2$. Otherwise, observe that \eqref{ij_cond1} and $D_{i-1} \in p^\ell \Z$ imply that
	$$
	a_{(i-1)i}(a_{(i-1)j} - a_{ij}) \equiv 0 \pmod{p^{e_{i-1}-2+\ell}}.
	$$
	Thus there exists positive integers $b,c$ so that $b+c= e_{i-1} -2+\ell$, $p^b\mid a_{(i-1)i}$, and $p^c\mid (a_{(i-1)j} - a_{ij})$. There are clearly a polynomial number of choices for the variables $a_{(i-1)i}, a_{(i-1)j}$, and $a_{ij}$.\\
	
	Fix a choice of  $a_{(i-1)i}, a_{(i-1)j}$, and $a_{ij}$ satisfying $D_{i-1} \in p^\ell \Z$. We compute the second row reduction step, and note that $v_i \circ v_j \in \text{Col}(A)$ implies
	$$
	D_{i-2}  = \frac{p^2a_{(i-2)i} a_{(i-2)j} - p^2a_{(i-2)i}a_{ij} - pa_{(i-2)(i-1)} D_{i-1}}{p^{e_{i-2}}} \in \Z.
	$$
	Notice that $a_{ij}$ and $D_{i-1}$ are now fixed integers.	Write $\ell = \nu_p(D_{i-1})$ and set $D_{i-1} = p^\ell d$. Then we can simplify the condition to
	$$
	D_{i-2}  = \frac{p^2a_{(i-2)i} a_{(i-2)j} - p^2a_{(i-2)i}a_{ij} - p^{\ell+1}a_{(i-2)(i-1)}d}{p^{e_{i-2}}} \in \Z.
	$$
	
	As before, we want to show that for each fixed diagonal $(e_1, \ldots, e_{n-1})$ and integer $\ell \ge 0$, $D_{i-2} \in p^m \Z$ has a polynomial number of solutions for $\{a_{(i-2)i}, a_{(i-2)j}, a_{(i-2)(i-1)}\}$. Thus we need to show that
	\begin{equation} \label{ij_cond2} \frac{p^2a_{(i-2)i} a_{(i-2)j} - p^2a_{(i-2)i}a_{ij} - p^{\ell+1}a_{(i-2)(i-1)}d}{p^{e_{i-2}+m}} \in \Z
	\end{equation}
	for all $m \ge 0$.

	If $\ell =0$ and $e_{i-2} +m\le 1$, then \eqref{ij_cond2} holds for all possible choices of the variables $a_{(i-2)i}, a_{(i-2)j}, a_{(i-2)(i-1)}$. If $\ell \ge 1$ and $e_{i-2}+m \le 2$, then \eqref{ij_cond2} holds for all possible choices of these variables.\\
	
	Assume $\ell =0$ and $e_{i-1} +m \ge 2$. Then we can simplify \eqref{ij_cond2} to
	$$
	D_{i-2}  = \frac{pa_{(i-2)i} a_{(i-2)j} - pa_{(i-2)i}a_{ij} -a_{(i-2)(i-1)}d}{p^{e_{i-2}+m-1}} \in \Z
	$$
	and then solve 
	$$
	a_{(i-2)(i-1)} \equiv d^{-1} \left(pa_{(i-2)i} a_{(i-2)j} - pa_{(i-2)i}a_{ij} \right) \pmod{p^{e_{i-2}+m-1}}.
	$$
	There are a polynomial number of solutions for all variables (recall that $a_{ij}$ is fixed).\\
	
	Assume $\ell \ge 1$ and $e_{i-1}+m \ge 3$. The condition simplifies to
	$$
	D_{i-2}  = \frac{a_{(i-2)i} a_{(i-2)j} - a_{(i-2)i}a_{ij} - p^{\ell-1}a_{(i-2)(i-1)} d}{p^{e_{i-2}+m-2}} \in \Z.
	$$
	
	If $\ell-1 \ge e_{i-2}+m-2$, then the condition simplifies to 
	$$
	\frac{a_{(i-2)i} a_{(i-2)j} - a_{(i-2)i}a_{ij}}{p^{e_{i-2}+m-2}} \in \Z.
	$$
	Recall that $a_{ij}$ is fixed and notice that we can solve this system similarly to step 1.\\
	
	Otherwise, set $f = a_{(i-2)i} a_{(i-2)j} - a_{(i-2)i}a_{ij}$. We see that $p^{\ell-1} \mid f$, so we can rewrite $f = p^{\ell-1} \tilde{f}$. The condition simplifies to
	$$
	\frac{\tilde{f} - a_{(i-2)(i-1)} d}{p^{e_{i-2}+m-\ell-1}} \in \Z.
	$$
	We can solve for 
	$$
	a_{(i-2)(i-1)} \equiv d^{-1} \tilde{f}\pmod{p^{e_{i-2}+m-\ell-1}}.
	$$
	
	A similar inductive proof goes through here.
\end{proof}

\begin{example}\label{maybe_qp_ex}
	While the number of solutions to $v_i \circ  v_j  \in \text{Col}(A)$ is polynomial in $p$ for any pair $(i,j)$ such that $1 \le i \le j  \le n$, the number of solutions to a pair $v_i \circ v_j, v_k \circ v_{\ell} \in \text{Col}(A)$ need not be. For example, let $\alpha = (3,2,2,2).$ We can compute $g_\alpha(p)$ by counting the simultaneous solutions to
	\begin{align}
		&-a_1a_4^2 + a_2^2\equiv 0 \pmod{p}\label{ex1a}\\
		&-a_1a_4a_5 + a_1a_4a_6 + a_2a_3 - a_2a_6 \equiv 0 \pmod{p}\label{ex1b}\\
		 &a_1a_4a_6^2 - a_1a_5^2p - a_1a_4a_6p - a_2a_6^2 + a_3^2p + a_2a_6p \equiv 0 \pmod{p^2}\label{ex1c}\\
		 &-a_4a_6^2 \equiv 0 \pmod{p}\label{ex1d}
	\end{align}
	Equation \eqref{ex1a} comes from $v_3 \circ v_3 \in \text{Col}(A)$, \eqref{ex1b} comes from $v_3 \circ v_4 \in \text{Col}(A)$, and \eqref{ex1c} - \eqref{ex1d} come from $v_4 \circ v_4 \in \text{Col}(A)$. The conditions $v_i \circ v_j \in \text{Col}(A)$ for $(i,j) \not \in\{ (3,3), (3,4), (4,4)\}$ do not give any additional restrictions on $a_{12}$.
	
	The number of solutions to the simultaneous conditions $v_3\circ v_3  \in  \text{Col}(A)$ and  $v_4 \circ v_4 \in \text{Col}(A)$ is quasipolynomial. In particular, the number of solutions to $v_3 \circ v_3 \in \text{Col}(A)$ and $v_4 \circ v_4 \in \text{Col}(A)$  is a polynomial in $p$ plus a polynomial multiple of $\#V(-a_1 a_4^2 + a_2^2, -a_1a_5^2 + a_3^2)$ where $V \subset \mb{A}^5(\F_p).$ We find
	$$\#V(-a_1 a_4^2 + a_2^2, -a_1a_5^2 + a_3^2) = \begin{cases}
		p^3 & p=2\\
		2p^3 - 3p^2 + 3p -1 & p>2
	\end{cases}.$$
	
	However, if we consider the simultaneous conditions $v_3\circ v_3\in \text{Col}(A),  v_3\circ v_4\in \text{Col}(A),$ and $v_4\circ v_4\in \text{Col}(A)$, there are a polynomial number of solutions. This implies that $g_\alpha(p)$ is polynomial in $p$. We still do not have an example of a diagonal $\alpha$ for which $g_\alpha(p)$ is not polynomial.
\end{example}

\section{Counting irreducible subrings of fixed index}
 The goal of this section is to show that for fixed $n>2$, $g_n(p^{n+2})$ is a polynomial in $p$. This result makes progress toward understanding the behavior of $g_n(p^e)$ for $e \ge n+2$ as a function in $p$. Further, Lemmas   \ref{lemma2beta} and \ref{lemma3beta} give partial information about $f_n(p^e)$ for each fixed $n> 2$ and $e \ge n+2$.

\begin{thm} \label{n+2indexthm} Let $n > 2$. The number of $n \times n$ irreducible subring matrices of index $p^{n+2}$ is a polynomial in $p$. In particular,\\
	\begin{align*}
		g_n(p^{n+2}) &= \frac{p^{n-5}}{24 (p-1 )^2}\bigg( 24 p^{2n-5}- 24 p^{2n-6}+ 24 p^{2 n-7}+ 12n(n-1) p^{n+1}\\
		&  - 12n(n-1) p^n - 	24 p^{n-2}- 12 (n-2)(n-3) p^{n-3}+ 12 (n-1)(n-4) p^{n-4} \\
		&+n (n-1)(n-2)(n-3) p^6   - 4n (n-1)^2(n-5)  p^5 \\
		& + (7 n^4-50n^3+41n^2+98n -120) p^4- 4 (n-2) (n+7) (n^2-7n + 9) p^3\\
		&+ (- 5 n^4+ 102 n^3 - 619 n^2 + 1482 n -1200 ) p^2 \\
		&+ 	4 (n-2) (n-3) (n-4) (2n-15) p	-(n-4)(n-5)(n-6)(3n-5)   \bigg).
	\end{align*}
\end{thm}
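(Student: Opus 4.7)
The plan is to decompose $g_n(p^{n+2})$ as a sum over compositions and evaluate each piece, using
$$g_n(p^{n+2}) = \sum_{\alpha \in C_{n,n+2}} g_\alpha(p).$$
A composition of $n+2$ into $n-1$ positive parts distributes an excess of $n+2-(n-1)=3$ among $n-1$ slots, so the underlying multiset of parts must be one of $\{4, 1^{n-2}\}$, $\{3, 2, 1^{n-3}\}$, or $\{2,2,2,1^{n-4}\}$. The numbers of compositions of each type are, respectively, $n-1$, $(n-1)(n-2)$, and $\binom{n-1}{3}$.

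For each multiset type I would compute $g_\alpha(p)$ using the row reduction framework of Section~\ref{background}. The $\{4, 1^{n-2}\}$ type has a single large diagonal entry and can be handled by a direct adaptation of the single-column argument used by Atanasov et al.\ \cite{akkm} for $g_n(p^{n+1})$. The $\{3, 2, 1^{n-3}\}$ type is covered by Lemma~\ref{lemma3beta} applied with $\beta = 2$, which already establishes polynomiality in $p$ for any permutation of $\{3, \beta, 1^{n-3}\}$. The $\{2,2,2,1^{n-4}\}$ type is new and requires a dedicated analysis.

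A key subtlety is that $g_\alpha(p)$ depends not only on the multiset of parts of $\alpha$ but on their positions, since each variable $a_{rs}$ ranges over $[0, p^{e_r})$ and the multiplicative closure conditions carry position-dependent shape. I would therefore parametrize each type by the positions of its non-unit entries (the slot $k$ containing the $4$; the ordered pair of slots for the $3$ and the $2$; the $3$-element subset of slots containing the $2$'s), derive the polynomial formula in $p$ for each position pattern by row reduction, and then sum over positions to extract the dependence on $n$. Summing the three type contributions and collecting like powers of $p$ should produce the explicit polynomial of degree $2n-5$ in the theorem statement, whose $n$-dependent coefficients reflect the enumeration over position patterns.

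The main obstacle is the $\{2,2,2,1^{n-4}\}$ case. Example~\ref{maybe_qp_ex} shows that when three diagonal entries equal $p^2$ the count for a proper subset of the relevant conditions $v_i \circ v_j \in \mathrm{Col}(A)$ is only quasipolynomial, with the non-polynomial contribution governed by point counts on an auxiliary affine variety that behaves differently at $p=2$. The resolution has to exploit the full system of closure conditions: imposing the remaining multiplicative relation (in that example, $v_3 \circ v_4 \in \mathrm{Col}(A)$) collapses the auxiliary variety onto a polynomial locus. Verifying this collapse uniformly across all $\binom{n-1}{3}$ arrangements of the three $2$'s, and then carrying out the final bookkeeping to match the stated closed form, is the hard part of the proof.
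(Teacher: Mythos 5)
Your overall decomposition by multiset type is the right structure, and you correctly identify the $\{2,2,2,1^{n-4}\}$ compositions as the genuinely new and difficult ingredient, as well as the mechanism (imposing the full set of closure conditions, not just a subset) by which the quasipolynomial count of Example~\ref{maybe_qp_ex} collapses to a polynomial. However, two points in your plan don't match how the argument actually has to go, and one of them is a real gap.

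First, your claim that Lemma~\ref{lemma3beta} with $\beta=2$ ``establishes polynomiality in $p$ for any permutation of $\{3, \beta, 1^{n-3}\}$'' is not correct: that lemma only treats compositions whose \emph{first} entry is~$3$. The permutations in which the~$2$ precedes the~$3$ are handled by a different result, namely Lemma~\ref{lemma2beta}/Corollary~\ref{cor2beta} applied with $\beta=3$, and the permutations in which a~$1$ comes first are not covered by either lemma directly. Second, and more fundamentally, you never invoke the reduction $g_{(1,\alpha')}(p) = g_{\alpha'}(p)$ from \cite[Lemma 3.4]{akkm}. The paper's proof is an induction on $n$: compositions of $n+2$ into $n-1$ parts that begin with $1$ contribute, in aggregate, exactly $g_{n-1}(p^{n+1})$, which is the theorem statement for $n-1$ (and also Proposition~\ref{gn+1} shifted); what remains is the sum over compositions beginning with an entry~$\geq 2$, which is exactly what Lemma~\ref{lemma4}, Corollary~\ref{cor2beta}, Corollary~\ref{cor32}, and Corollary~\ref{cor222} compute. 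Without the strip-leading-ones reduction, your ``sum over all position patterns'' plan would force you to derive, for every placement of the non-unit entries including those behind a prefix of ones, a fresh formula by row reduction. That is logically possible, but it amounts to re-discovering the reduction lemma position by position, and it is not how the count is actually closed out in the paper. You should either incorporate \cite[Lemma 3.4]{akkm} explicitly and restrict to compositions starting with an entry $\geq 2$, or make the case that your per-position formulas naturally telescope into the inductive term $g_{n-1}(p^{n+1})$; as written the plan leaves the leading-one compositions unaccounted for.
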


Theorem \ref{n+2indexthm} is consistent with the formulas for $g_{n}(p^{n+2})$ when $3 \le n \le 6$ given in \cite[Tables 1 and 2]{akkm}.


\subsection{The formula for $g_n(p^{n+2})$}
The main result is a formula for the number of irreducible $n \times n$ subring matrices of index $p^{n+2}$. 

Recall that $C_{n,e}$ denotes the set of compositions of $e$ into $n-1$ parts. Let $n > 2$. Observe that $C_{n, n+2}$ consists only of compositions that are permutations of the following multisets of cardinality $n-1$: $\{4, 1, \ldots, 1\}$ , $\{3, 2, 1, \ldots, 1\}$, or $\{2, 2,2, 1, \ldots, 1\}$. Therefore in order to understand $g_n(p^{n+2})$, it suffices to study $g_\alpha(p)$ for the compositions $\alpha$ of $n-1$ which are permutations of the multisets listed above.
Atanasov et al. \cite{akkm} handle several of these cases. 
\vspace{.05in}
\begin{lemma} \cite[Lemma 3.4]{akkm}
	Let $\alpha = (1, \alpha_2, \ldots, \alpha_k)$ be a composition of positive integer $e$ and let $\alpha' = (\alpha_2, \ldots \alpha_k)$. Then $g_\alpha(p) = g_{\alpha'}(p)$. 
\end{lemma}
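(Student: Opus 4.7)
The plan is to exhibit an explicit bijection between irreducible subring matrices with diagonal $(p, p^{\alpha_2}, \ldots, p^{\alpha_k}, 1)$ and those with diagonal $(p^{\alpha_2}, \ldots, p^{\alpha_k}, 1)$. The starting observation is that the Hermite normal form conditions force $0 \le pa_{1j} < p^{\alpha_1} = p$ for each $2 \le j \le n-1$, so $a_{1j} = 0$ for every such $j$. Thus every irreducible subring matrix $A$ with diagonal $\alpha$ has first row $(p, 0, \ldots, 0, 1)$ and first column $v_1 = (p, 0, \ldots, 0)^T$, and the natural candidate bijection sends an $(n-1)\times(n-1)$ matrix $A'$ with diagonal $\alpha'$ to the $n\times n$ matrix $A$ obtained by prepending this fixed first row and column, under the identification $a_{ij} \leftrightarrow a'_{(i-1)(j-1)}$ for $2 \le i < j \le n-1$.

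I would then verify that this map preserves all multiplicative closure conditions. The products involving $v_1$ are immediate: $v_1 \circ v_1 = p v_1$; $v_1 \circ v_j = 0$ for $2 \le j \le n-1$, since each such $v_j$ has vanishing first entry; and $v_1 \circ v_n = v_1$. The products $v_i \circ v_n = v_i$ and $v_n \circ v_n = v_n$ are trivially in $\mathrm{Col}(A)$. For $2 \le i \le j \le n-1$, the vector $v_i \circ v_j$ vanishes in rows $1$ and $n$; writing $v_i \circ v_j = \sum_{\ell=1}^n c_\ell v_\ell$, row $1$ yields $c_1 p + c_n = 0$ and row $n$ yields $c_n = 0$, which together force $c_1 = c_n = 0$. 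The remaining row equations then reduce exactly to the condition $v'_{i-1} \circ v'_{j-1} \in \mathrm{Col}(A')$ after reindexing, so the closure axioms on $A$ and $A'$ match up.

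This yields the bijection and hence $g_\alpha(p) = g_{\alpha'}(p)$. The argument is essentially bookkeeping; the one point deserving care is that $v_n = (1, \ldots, 1)^T$ has nonzero first entry, so rows $1$ and $n$ must be used jointly (rather than either on its own) to conclude $c_1 = c_n = 0$. I do not expect any real obstacle beyond careful tracking of indices.
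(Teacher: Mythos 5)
Your argument is correct. The paper cites this lemma directly from \cite{akkm} without reproducing a proof, so there is no internal argument to compare against; your bijection (prepend/strip the forced first row $(p,0,\ldots,0,1)$ and column, using the Hermite normal form inequality $0 \le pa_{1j} < p$ to conclude $a_{1j}=0$, then check that the closure conditions decouple) is the natural one and holds up. The only point I would make slightly more explicit is that the same "all-ones column" argument you invoke to force $c_1 = c_n = 0$ for $A$ is also what guarantees the equivalence on the $A'$ side: for $2 \le i \le j \le n-1$, the surviving equations place $v'_{i-1} \circ v'_{j-1}$ in $\mathrm{Col}(v'_1,\ldots,v'_{n-2})$, and this agrees with membership in $\mathrm{Col}(A')$ precisely because the last entry of $v'_{i-1} \circ v'_{j-1}$ vanishes and $v'_{n-1}$ is the unique column with nonzero last entry, so its coefficient is forced to zero as well.
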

\vspace{.05in}
\begin{lemma}\cite[Lemma 3.5, $\beta=4$]{akkm} \label{lemma4}
	Let $\alpha = (4, 1, \ldots, 1)$ be a composition of length $n-1$. Then 
	$$
	g_\alpha(p) =(n-1)p^{n-2}.
	$$
\end{lemma}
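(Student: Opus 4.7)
The plan is to first observe that the Hermite normal form condition $0 \le a_{ij} < p^{e_i - 1}$ forces most entries to vanish for this particular diagonal. Since $e_i = 1$ for $2 \le i \le n-1$, we get $a_{ij} = 0$ whenever $2 \le i < j \le n-1$, so the only free variables are $a_{12}, a_{13}, \ldots, a_{1(n-1)}$, each ranging over $[0, p^3)$. The subring matrix therefore takes the simplified shape with columns $v_j = (pa_{1j}, 0, \ldots, 0, p, 0, \ldots, 0)^T$ for $2 \le j \le n-1$ (with $p$ in position $j$), together with $v_1 = (p^4, 0, \ldots, 0)^T$ and $v_n = (1, \ldots, 1)^T$.

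Next, I would compute the Hadamard products $v_i \circ v_j$ and check containment in $\mathrm{Col}(A)$. The products involving $v_1$ are automatic: $v_1 \circ v_1 = p^4 \cdot v_1$, and $v_1 \circ v_j = p a_{1j} \cdot v_1$ for $2 \le j \le n-1$. Products with $v_n$ also pose no restriction since $v_j \circ v_n = v_j$. The only non-trivial products are $v_j \circ v_k$ with $2 \le j \le k \le n-1$; each has zeros everywhere except possibly in position $1$, in position $j$, and in position $k$. Writing such a vector as a $\Z$-linear combination of the columns of $A$ pins down the coefficients on $v_j, v_k, v_n$ immediately from positions $2,\ldots,n-1$ and $n$, so the only constraint comes from matching position $1$.

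Carrying out this calculation yields two families of divisibility conditions:
\begin{itemize}
\item $v_j \circ v_j \in \mathrm{Col}(A)$ is equivalent to $p^2 \mid a_{1j}(a_{1j}-1)$ for each $2 \le j \le n-1$;
\item $v_j \circ v_k \in \mathrm{Col}(A)$ for $j \ne k$ in $\{2,\ldots,n-1\}$ is equivalent to $p^2 \mid a_{1j} a_{1k}$.
\end{itemize}
Since $\gcd(a_{1j}, a_{1j}-1)=1$, the first condition forces each $a_{1j}$ to be congruent to $0$ or to $1$ modulo $p^2$.

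Finally, I would combine the conditions: a value $a_{1j} \equiv 1 \pmod{p^2}$ is a unit mod $p^2$, so the pairwise condition $p^2 \mid a_{1j} a_{1k}$ forces every \emph{other} $a_{1k}$ to be $\equiv 0 \pmod{p^2}$. Hence at most one of the $a_{1j}$ is $\equiv 1 \pmod{p^2}$. Each residue class $0$ or $1$ mod $p^2$ contains exactly $p$ elements of $[0,p^3)$, so the count is
$$
g_\alpha(p) = \underbrace{p^{n-2}}_{\text{all }\equiv 0} + \underbrace{(n-2) \cdot p \cdot p^{n-3}}_{\text{exactly one }\equiv 1} = (n-1) p^{n-2}.
$$
The computation is essentially routine once the shape of the matrix is understood; the only place requiring care is the bookkeeping that shows coefficients on $v_n$ and on $v_i$ ($i \ne j, k$) must vanish, which reduces the containment condition to a single divisibility relation in the top entry.
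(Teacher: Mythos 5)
Your proof is correct. Note that the paper cites this lemma from Atanasov, Kaplan, Krakoff, and Menzel without reproducing the argument, so there is no in-text proof to compare against directly; your reconstruction is a clean, self-contained derivation that fits squarely within the framework the paper uses elsewhere (writing out the Hermite normal form of the irreducible subring matrix, computing $v_i \circ v_j$, and solving $A\vec{x} = v_i\circ v_j$ coordinate by coordinate, which is exactly the row-reduction method of Section 2). The key observations are all right: the constraints $0 \le a_{ij} < p^{e_i-1}$ kill all entries except $a_{12},\ldots,a_{1(n-1)}$ since $e_i=1$ for $i\ge 2$; products with $v_1$ or $v_n$ impose nothing; for $v_j\circ v_k$ with $2\le j\le k\le n-1$ the coefficients on $v_2,\ldots,v_n$ are forced to be $0$ (except the coefficient on $v_j$ is $p$ when $j=k$), leaving a single divisibility condition in position $1$; the resulting conditions $p^2\mid a_{1j}(a_{1j}-1)$ and $p^2\mid a_{1j}a_{1k}$ reduce, via coprimality of consecutive integers, to each $a_{1j}\equiv 0$ or $1\pmod{p^2}$ with at most one residue $1$; and the count over $[0,p^3)$ gives $p^{n-2}+(n-2)p^{n-2}=(n-1)p^{n-2}$ as claimed.
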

\vspace{.05in}

\begin{lemma}\cite[Lemma 3.6]{akkm}\label{lemma2beta}
	Let $\alpha = (2, 1, \ldots,1,  \beta, 1 , \ldots, 1)$ be a composition of length $n-1$ with $\beta > 1$ in the $k^{th}$ position. Then
	\begin{enumerate}
		\item If $\beta = 2$, $g_\alpha(p) = p^{2n-k-4} + (n-k) p^{n-3}(p-1)$.
		\item If $\beta \geq 3$, $g_\alpha(p) =(n-k) \left(p^{2n-k-4} +p^{n-3}(p-1)\right).$
	\end{enumerate}
\end{lemma}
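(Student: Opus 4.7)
The plan is to explicitly determine every non-trivial congruence condition imposed by the closure requirements $v_i\circ v_j \in \text{Col}(A)$ on the free variables of the matrix
$$
A = \begin{pmatrix}
p^2 & pa_{12} & \cdots & pa_{1(k-1)} & pa_{1k} & \cdots & pa_{1(n-1)} & 1\\
 & p & \cdots & pa_{2(k-1)} & pa_{2k} & \cdots & pa_{2(n-1)} & 1\\
 & & \ddots & \vdots & \vdots & & \vdots & \vdots\\
 & & & p & pa_{(k-1)k} & \cdots & pa_{(k-1)(n-1)} & 1\\
 & & & & p^\beta & \cdots & pa_{k(n-1)} & 1\\
 & & & & & \ddots & \vdots & \vdots\\
 & & & & & & p & 1\\
 & & & & & & & 1
\end{pmatrix},
$$
then count the solutions. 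The free variables are $a_{1m}\in[0,p^2)$, $a_{km}\in[0,p^\beta)$ for $m>k$, and $a_{rs}\in[0,p)$ elsewhere.

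The first step is a structural observation: when we apply the row-reduction method of Section \ref{background} to the augmented matrix $[A\mid v_i\circ v_j]$, any row $\ell$ with $e_\ell=1$ lets the corresponding coefficient $c_\ell$ in the expansion $v_i\circ v_j=\sum c_m v_m$ be solved as an integer with no further restriction. Thus the only possible non-trivial conditions come from row $1$ (diagonal $p^2$) and row $k$ (diagonal $p^\beta$), producing a congruence mod $p$ on the $a_{1m}$'s and a congruence mod $p^{\beta-1}$ on the $a_{km}$'s, respectively.

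The second step is to enumerate products $(i,j)$ with $1\le i\le j<n$ and record the resulting conditions. Working upward row-by-row, $c_\ell=0$ for $\ell>\max(i,j)$, then $c_i=pa_{ij}$ (or $c_k=p^\beta$ in the diagonal case $v_k\circ v_k$), then $c_\ell$ is an integer for each intermediate $\ell\notin\{1,k\}$. Substituting these into row $1$ yields a condition of the form
$$
\sum_{m=2}^{i-1} c_m\,a_{1m}\;\equiv\; R^{(1)}_{ij}\pmod{p}
$$
for an explicit integer polynomial $R^{(1)}_{ij}$ in the fixed variables, and, when $i>k$, substituting into row $k$ yields a parallel condition on the $a_{km}$'s mod $p^{\beta-1}$. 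The crucial point is that after the intermediate substitutions, the row-$1$ conditions involve only $a_{1m}$ variables and the row-$k$ conditions involve only $a_{km}$ variables, so the two systems decouple.

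The third step is to count. For each fixed choice of the remaining $a_{rs}$'s (each contributing a factor of $p$), the number of admissible $a_{1m}$-tuples is determined by the rank mod $p$ of the linear system obtained from the row-$1$ conditions, and similarly for $a_{km}$ via the row-$k$ system mod $p^{\beta-1}$. The case split then emerges naturally: when $\beta=2$ the row-$k$ congruence has the same strength ($p$) as the row-$1$ congruence, allowing an ``aligned'' family of solutions that gives the isolated term $p^{2n-k-4}$; when $\beta\ge 3$ the row-$k$ condition is strictly stronger, all configurations factor uniformly, and the count picks up an overall factor of $(n-k)$. I expect the main obstacle to be the careful bookkeeping showing that the substitutions into rows $1$ and $k$ really decouple across all products simultaneously, and I would cross-check the formula against $(n,k)=(3,2)$ with $\beta=2$ and $\beta=3$ (and against Lemma \ref{lemma4} in a limiting sense) to catch any error.
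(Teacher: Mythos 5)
This lemma is quoted from \cite{akkm} and the paper does not reprove it, but the appendix proofs of the analogous Lemmas \ref{lemma3beta} and \ref{lemma222} show exactly what a correct argument looks like, and your proposal diverges from that in ways that would fail. First, your variable ranges are wrong: in Hermite normal form $0\le pa_{rs}<p^{e_r}$, so $a_{1m}\in[0,p)$ (not $[0,p^2)$), $a_{km}\in[0,p^{\beta-1})$ (not $[0,p^\beta)$), and in every row $r\notin\{1,k\}$ the off-diagonal entries are forced to be $0$ --- there are no free variables ``elsewhere.'' As written, your count would be off by large powers of $p$. Compare the displayed matrices in the proofs of Lemmas \ref{lemma3beta} and \ref{lemma222}, where the rows with diagonal $p$ are identically zero off the diagonal.

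Second, and more fundamentally, the two structural claims your counting step rests on are both false. The conditions produced by row reduction are quadratic, not linear: they have the shape $a_{1i}a_{1j}\equiv\cdots$, $a_{1j}^2-a_{1j}\equiv\cdots \pmod p$, so there is no ``rank mod $p$ of the linear system'' to compute; the solution set is a variety whose point count requires the combinatorial case analysis (all $a_{1m}\equiv 0$, or exactly one $\equiv 1$, etc.) that occupies the appendix proofs. And the row-$1$ and row-$k$ systems do \emph{not} decouple: for $j>k$ the product $v_j\circ v_j$ contributes, at row $1$, a condition of the form
$$
\frac{a_{1j}^2-a_{1j}}{p}-\frac{(a_{kj}^2-a_{kj})\,a_{1k}}{p^{\beta}}\in\Z,
$$
which ties $a_{1j}$, $a_{1k}$, and $a_{kj}$ together (this is precisely the term $\frac{(b_j^2-b_j)a_k}{p^{\beta}}$ in the proof of Lemma \ref{lemma3beta}). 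Handling this coupling --- by splitting on whether $p\mid a_{1k}$ and on the resulting valuation --- is the heart of the argument, and it is also where the genuine source of the $\beta=2$ versus $\beta\ge 3$ dichotomy lies (the row-$k$ conditions $p^{\beta-2}\mid a_{kj}(a_{kj}-1)$ and $p^{\beta-2}\mid a_{ki}a_{kj}$ are vacuous exactly when $\beta=2$), not the ``aligned family'' heuristic you describe. Your plan to sanity-check against small $(n,k)$ is good practice, but the argument itself needs to be rebuilt along the lines of the appendix proofs.
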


We can derive the following corollary of Lemma \ref{lemma2beta} by summing over all permutations of the diagonal $\alpha = (2, 1, \ldots, 1,\beta, 1, \ldots, 1).$
\vspace{.05in}
\begin{cor}\label{cor2beta} Let $\alpha = (2, 1, \ldots, \beta, 1 , \ldots, 1)$ be a composition of length $n-1$ with $\beta > 2$ in the $k^{th}$ position for some $k \in [2, n-1]$. Let $S_{2, \beta}$ be the set of all such $\alpha$. Then
	$$\sum_{\alpha \in S_{2, \beta}} g_\alpha(p) = \frac{p^{n- 3} ((n - 2) p^{n - 1} - (n - 1) p^{n-2} + 1))}{(p - 1)^2}+ (p-1)p^{n-3} \binom{n-1}{2}. $$
\end{cor}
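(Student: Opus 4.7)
The plan is to apply Lemma \ref{lemma2beta}(2) to each element of $S_{2,\beta}$ and collect the pieces into closed form. Since $\beta > 2$, case (2) of Lemma \ref{lemma2beta} applies to every $\alpha \in S_{2,\beta}$, giving
\[
g_\alpha(p) = (n-k)\bigl(p^{2n-k-4} + p^{n-3}(p-1)\bigr),
\]
where $k \in [2, n-1]$ is the position of $\beta$. Thus $\sum_{\alpha \in S_{2,\beta}} g_\alpha(p)$ splits naturally into two sums indexed by $k$, which I would evaluate separately.

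The second sum is the easy one: $p^{n-3}(p-1)\sum_{k=2}^{n-1}(n-k)$. The inner sum equals $1+2+\cdots+(n-2) = \binom{n-1}{2}$, producing exactly the term $(p-1)p^{n-3}\binom{n-1}{2}$ in the claimed formula.

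For the first sum, I would substitute $j = n-k$ to rewrite
\[
\sum_{k=2}^{n-1}(n-k)p^{2n-k-4} \;=\; p^{n-4}\sum_{j=1}^{n-2} j\, p^{j}.
\]
Then I would evaluate $\sum_{j=1}^{n-2} j p^{j}$ by differentiating the geometric identity $\sum_{j=0}^{n-2} p^{j} = (p^{n-1}-1)/(p-1)$ with respect to $p$ and multiplying by $p$. A direct computation of the quotient rule gives
\[
\sum_{j=1}^{n-2} j p^{j} \;=\; \frac{p\bigl((n-2)p^{n-1} - (n-1)p^{n-2} + 1\bigr)}{(p-1)^2}.
\]
Multiplying by $p^{n-4}$ yields the first summand $p^{n-3}\bigl((n-2)p^{n-1}-(n-1)p^{n-2}+1\bigr)/(p-1)^2$ in the claimed formula.

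There is no conceptual obstacle here beyond careful index bookkeeping: the corollary is a routine consequence of Lemma \ref{lemma2beta}(2) combined with the standard closed form for $\sum_{j=1}^{n-2} jp^j$. As a sanity check, I would verify the small case $n=4$, $\beta=3$ (giving $k \in \{2,3\}$), where both sides evaluate to $5p^2 - 2p$.
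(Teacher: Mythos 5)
Your proof is correct and follows the same term-by-term decomposition the paper uses; the only difference is that you spell out the reindexing $j = n-k$ and the derivative-of-a-geometric-series trick for evaluating $\sum_{j=1}^{n-2} j p^j$, whereas the paper states both sums' closed forms without showing the work. Your sanity check at $n=4$, $\beta=3$ also checks out ($5p^2 - 2p$ on both sides).
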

\begin{proof}
	We consider the sum term by term.\\
	
	\noindent \textbf{Term 1:}
	\begin{align*}
		\sum_{k=2}^{n-1} (n-k) p^{2n-k-4} &=  \frac{p^{n- 3} ((n - 2) p^{n - 1} - (n - 1) p^{n-2} + 1))}{(p - 1)^2}.
	\end{align*}
	
	\noindent \textbf{Term 2:}
	\begin{align*}
		\sum_{k=2}^{n-1} (n-k) p^{n-3} (p-1) &= (p-1)p^{n-3} \sum_{k=2}^{n-1} (n-k)\\
		&= (p-1)p^{n-3}\binom{n-1}{2}.
	\end{align*}
\end{proof}

There are two main cases remaining. First, we consider $\alpha$ to be a permutation of the multiset $\{3, 2, 1\ldots, 1\}$ for which the first entry is 3. Second, we consider $\alpha$ to be a permutation of the multiset $\{2, 2,2,1, \ldots, 1\}$ for which the first entry is 2. We defer proofs of the following lemmas and corollaries until the appendix as the proofs are fairly long. We emphasize that Lemma \ref{lemma3beta} gives partial results about $g_n(p^e)$ for $e > n+2$ as well. These results are consistent with the computations given in \cite[Tables 3, 4, 5]{akkm}.
\vspace{.05in}
\begin{lemma} \label{lemma3beta}
	Let $\alpha = (3, 1, \ldots, \beta, 1, \ldots, 1)$ be a composition of length $n-1$ with $\beta > 1$ in the $(k+1)$ position. Then 
	\begin{enumerate}
		\item If $\beta =2$, 
		\begin{align*}
			g_\alpha(p) &= (n-1)p^{2n-k-5} +   (n-k-1)(n-2) p^{n-3} (p-1)- (n-k-1)p^{n-3} \\
			&+\left((n-k-2) + \binom{n-k-2}{2}\right)p^{n-3}(p-2)(p-3) .
		\end{align*}
		\item If $\beta > 2$, 
		\begin{align*}
			g_\alpha(p) &= (n-k-1)(n-2) p^{2n-k-5} \\
			&+ \left((n-k-1)(k-1) + 1 + (n-k-2)(n-k-3)\right)  p^{n-3} (p-1) \\
			&+(n-k-2)p^{n-3}(p-1)^2 + (n-k-2)p^{n-2} (p-1)\\
			&+ (n-k-2)(n-k-3)p^{n-3}(p-1)(p-2).
		\end{align*}
	\end{enumerate}
\end{lemma}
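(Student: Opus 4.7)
The plan is to adapt the row-reduction framework of Section \ref{background}, which was used in \cite[Lemma 3.6]{akkm} to prove Lemma \ref{lemma2beta}, to the diagonal $\alpha = (3, 1, \ldots, 1, \beta, 1, \ldots, 1)$ with $\beta$ in position $k+1$. Fix a candidate irreducible subring matrix $A$ with this diagonal; the free entries are the $a_{rs}$ with $1 \le r < s \le n-1$ and $0 \le a_{rs} < p^{e_r}$, so $a_{1s} \in [0, p^3)$, $a_{(k+1)s} \in [0, p^\beta)$ for $s > k+1$, and every other $a_{rs} \in [0, p)$. Since $v_1 \circ v_j$, $v_j \circ v_n$, and $v_n \circ v_n$ lie automatically in $\text{Col}(A)$, the nontrivial constraints come only from $v_i \circ v_j$ with $2 \le i \le j \le n-1$.

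For each such pair I would form the augmented matrix $[A \mid v_i \circ v_j]$ and row-reduce as in the example at the end of Section \ref{background}. A row-reduction step in a row $r$ with $e_r = 1$ contributes a congruence mod $p$, the step in row $k+1$ a congruence mod $p^\beta$, and the step in row $1$ a congruence mod $p^3$. The system decomposes naturally according to where $i$ and $j$ fall relative to the ``special'' indices $1$ and $k+1$, so I would partition the pairs $(i,j)$ into five groups: $2 \le i \le j \le k$, $2 \le i \le k$ and $j = k+1$, $i = j = k+1$, $i = k+1 < j$, and $k+2 \le i \le j \le n-1$. In each group the row-$1$ congruence constrains the pair $(a_{1i}, a_{1j})$ modulo small powers of $p$, while the rows with $e_r = 1$ produce linear mod-$p$ conditions on $a_{rj}$ that can be resolved one variable at a time, in the style of Proposition \ref{vivi_poly}.

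The split on $\beta$ is forced by the row-$1$ reduction of $v_{k+1} \circ v_{k+1} \in \text{Col}(A)$, which, after clearing powers of $p$, takes the schematic form $a_{1(k+1)}^2 \equiv p^{\beta-1} a_{1(k+1)} + (\text{lower-order mixing terms}) \pmod{p}$. When $\beta > 2$ the term $p^{\beta-1} a_{1(k+1)}$ vanishes to strictly higher order; the surviving quadratic condition $p \mid a_{1(k+1)}^2$ forces $p \mid a_{1(k+1)}$ and decouples $a_{1(k+1)}$ from the $a_{(k+1)s}$ variables, producing the independent $(p-1)^2$ and $p(p-1)$ factors visible in the second formula. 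When $\beta = 2$ the two terms sit at the same $p$-adic order and the resulting quadratic in $a_{1(k+1)}$ couples with the $a_{1j}$ for $j \ne k+1$; counting the solutions to the coupled system carefully is what produces the negative correction $-(n-k-1)p^{n-3}$ and the $(p-2)(p-3)$ factor, the latter reflecting pairs of distinct nonzero residues mod $p$.

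The main obstacle will be the simultaneous bookkeeping across the five subcases: the full count is a product over pairs $(i,j)$, but constraints from different pairs share variables, and one must avoid both over- and under-counting. I would proceed iteratively, first using the diagonal conditions $v_j \circ v_j \in \text{Col}(A)$ for $j = 2, \ldots, n-1$ to restrict each $a_{1j}$ modulo appropriate powers of $p$, then feeding these into the off-diagonal conditions $v_i \circ v_j \in \text{Col}(A)$ with $i < j$ to resolve the remaining $a_{rs}$ in order of decreasing $r$, mirroring the inductive scheme of Proposition \ref{vivi_poly}. Summing over the five index ranges and collecting like terms in $p$, $n$, and $k$ then yields the claimed closed-form expressions, which can be cross-checked against the small-$n$ data in \cite[Tables 3, 4, 5]{akkm}.
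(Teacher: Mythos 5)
Your starting point is sound—the row-reduction machinery of Section \ref{background} is exactly what the paper uses—but the proposal contains a misreading of the free-variable structure that would derail the count.

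The Hermite normal form condition bounds the $(r,s)$ entry $pa_{rs}$ by $p^{e_r}$, so $a_{rs} \in [0,\, p^{e_r - 1})$, not $[0,\, p^{e_r})$. In particular, for every row $r \notin \{1, k+1\}$ one has $e_r = 1$, which forces $a_{rs} = 0$; those rows carry no parameters at all. The only free variables are the first-row entries $a_{1s} \in [0,p^2)$ (which the paper denotes $a_1,\ldots,a_{n-2}$) and the row-$(k+1)$ entries $a_{(k+1)s} \in [0,p^{\beta-1})$ (the paper's $b_{k+1},\ldots,b_{n-2}$). Your proposal treats every $a_{rs}$ with $1 \le r < s \le n-1$ as a free variable in $[0,p)$ and then plans to "resolve the remaining $a_{rs}$ in order of decreasing $r$," but there is nothing to resolve in those rows—they are identically zero. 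Consequently the five-way partition of pairs $(i,j)$ and the picture of "constraints from different pairs sharing variables" describe a far larger system than actually arises, and a count built on it would come out with the wrong degree in $p$.

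Two further points. First, your diagnosis of where the $\beta=2$ versus $\beta>2$ dichotomy enters is off: the first-row reduction of $v_{k+1}\circ v_{k+1}$ yields $\frac{a_k^2 - p^{\beta-1}a_k}{p} \in \Z$, which forces $p \mid a_k$ for every $\beta \ge 2$, not only $\beta > 2$. The real bifurcation comes from the row-$(k+1)$ conditions $p^{\beta-2}\mid b_i b_j$ and $p^{\beta-2}\mid b_j^2 - b_j$, which are vacuous for $\beta=2$ (leaving the $b_i$ free to interact with the $a_i$ and produce the $(p-2)(p-3)$ factors) but pin the $b_j$ to $\{0,1\}$ up to divisibility when $\beta>2$. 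Second, the paper's actual case split is not by pairs $(i,j)$ but by whether $p\mid(a_i-1)$ for some $i<k$ or $p\mid a_i$ for all $i<k$ (coming from $v_i\circ v_i$ for $i<k$), and then by $d = a_k/p$ and the shape of the $b$'s and $a$'s with index $>k$; this is a leaner decomposition than five index-range groups. The proposal as written is a strategy sketch rather than a derivation, and the strategy rests on an incorrect parameter count, so it would need the bound corrected and the case structure reorganized before the closed formulas could be reached.
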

\vspace{.05in}

\begin{cor} \label{cor32}
	Let $\alpha = (3, 1, \ldots, 2, 1, \ldots, 1)$ be a composition of length $n-1$ with $\beta =2$ in the $(k+1)$ position for some $k \in [1, n-2]$. Let $S_{3, 2}$ be the set of all such $\alpha$. Then
	\begin{align*}
		\sum_{\alpha \in S_{3, 2}} g_\alpha(p) &= \frac{1}{6(p-1)}\bigg(6(n-1)p^{2n-5} + (n-1)(n-2)(n-3)p^{n}\\
		&  - 3(n-1)(n-2)(n-4)p^{n-1} +(n-1)(n-2)(5n-24) p^{n-2} \\
		&- 3(n-1)(n-3)(n-4)p^{n-3}\bigg).
	\end{align*}
\end{cor}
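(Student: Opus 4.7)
The plan is to apply the case $\beta = 2$ of Lemma \ref{lemma3beta} directly. Parametrize $S_{3,2}$ by the position $k+1$ of the entry equal to $2$, so that $k$ runs over $[1, n-2]$, and split $\sum_{\alpha \in S_{3,2}} g_\alpha(p)$ as a sum of four pieces corresponding to the four summands in the formula for $g_\alpha(p)$.

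First, the geometric piece evaluates as
\[
\sum_{k=1}^{n-2}(n-1)p^{2n-k-5} \;=\; (n-1)\sum_{j=n-3}^{2n-6} p^j \;=\; \frac{(n-1)\bigl(p^{2n-5}-p^{n-3}\bigr)}{p-1}.
\]
For the second and third pieces, the reindexing $m = n-k-1$ collapses both sums to $\sum_{m=1}^{n-2} m = \binom{n-1}{2}$, contributing $(n-2)(p-1)p^{n-3}\binom{n-1}{2}$ and $-p^{n-3}\binom{n-1}{2}$ respectively. For the fourth piece, the substitution $m = n-k-2$ together with the hockey stick identity $\sum_{m=0}^{n-3}\binom{m}{2} = \binom{n-2}{3}$ and Pascal's rule $\binom{n-2}{2}+\binom{n-2}{3} = \binom{n-1}{3}$ gives $\binom{n-1}{3}\,p^{n-3}(p-2)(p-3)$.

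Finally, I would combine the four contributions over the common denominator $6(p-1)$, using $\binom{n-1}{2}=\tfrac{(n-1)(n-2)}{2}$ and $\binom{n-1}{3}=\tfrac{(n-1)(n-2)(n-3)}{6}$ to account for the factor of $6$, and the geometric piece to account for the factor of $p-1$. Expanding $(p-1)^2$ and $(p-1)(p-2)(p-3) = p^3-6p^2+11p-6$ and collecting powers of $p$ in the resulting polynomial (times $p^{n-3}$) should yield the stated coefficients $(n-1)(n-2)(n-3)$, $-3(n-1)(n-2)(n-4)$, $(n-1)(n-2)(5n-24)$, and $-3(n-1)(n-3)(n-4)$ on $p^n$, $p^{n-1}$, $p^{n-2}$, and $p^{n-3}$ respectively. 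The main obstacle is simply this final algebraic bookkeeping, where contributions from three of the four pieces mix; the cleanest sanity check is to verify that the coefficient of $p^{n-2}$ produces the factor $5n-24$, since this is where the three $p^{n-3}$ pieces genuinely interact, and to cross-check against the small-$n$ entries of \cite[Tables 3, 4, 5]{akkm}.
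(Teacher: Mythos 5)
Your approach is the same as the paper's: apply Lemma \ref{lemma3beta} with $\beta = 2$ and sum each of the four terms over $k = 1, \ldots, n-2$. The paper outsources the individual sums to Mathematica, whereas you do them by hand via reindexing, the geometric series, the hockey stick identity, and Pascal's rule; your piece-by-piece reductions (including $\binom{n-2}{2}+\binom{n-2}{3}=\binom{n-1}{3}$ for the fourth term) are correct and, expanding over the common denominator $6(p-1)$, do reproduce the stated coefficients.
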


\begin{lemma} \label{lemma222}
	Let $\alpha = (2, 1, \ldots, 2, 1, \ldots,2, 1, \ldots 1)$ be a composition of length $n-1$ with a 2 in the 1, $(k+1)$, and $(\ell+1)$ positions for $1 \leq k < \ell \leq n-2 $. Then 
	\begin{align*}
		g_\alpha(p) &=  p^{3n -k-\ell-9} + (n-\ell-1) (p-1) (p+1)p^{2n -k-7} \\
		&+(n-k-1)(p-1) p^{2n-\ell-6} 	 - (n-\ell-1)p^{n-4}(p-1)  \\
		&+(n-k-2)(n-\ell-1)  (p-1)^2p^{n-4}\\
		& + \left((n-\ell-2) + \binom{n-\ell-2}{2}\right)(p-1)(p-2)(p-3)p^{n-4}.
	\end{align*}
\end{lemma}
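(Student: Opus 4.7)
The plan is to adapt the row-reduction framework of Section \ref{background}, following the strategy used for Lemmas \ref{lemma2beta} and \ref{lemma3beta} but now tracking three ``large'' columns instead of two. Label the columns $v_1, v_{k+1}, v_{\ell+1}$ whose diagonal entry is $p^2$ as the distinguished columns. I would begin by observing that any product $v_i \circ v_j$ whose support lies entirely in rows $r$ with $e_r = 1$ contributes no genuine constraint, because the corresponding row reductions divide cleanly through by $p$. Hence the non-trivial closure conditions come only from those products whose nonzero rows include at least one of rows $1$, $k+1$, or $\ell+1$; in particular, for each $j > \ell+1$ the product $v_{\ell+1} \circ v_j$ contributes conditions at row $\ell+1$, and analogously for $v_{k+1} \circ v_j$ at row $k+1$ and for the diagonal products $v_{k+1}\circ v_{k+1}$ and $v_{\ell+1}\circ v_{\ell+1}$.

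Second, I would run the row-reduction algorithm from Section \ref{background} on each non-trivial product. The resulting congruences should split naturally into three layers keyed by the ``top'' nonzero row: conditions coming from row $\ell+1$ restrict the entries $\{a_{(\ell+1)j}\}_{j > \ell+1}$; conditions from row $k+1$ restrict $\{a_{(k+1)j}\}_{j \ge k+1}$ together with $a_{1(k+1)}$; and conditions from row $1$ restrict $\{a_{1j}\}$ together with $a_{(k+1)(\ell+1)}$ via cross-terms arising in $v_{\ell+1}\circ v_{\ell+1}$ and $v_{k+1}\circ v_{\ell+1}$. This layered structure is exactly what lets the count be evaluated inductively: once the row-$(\ell+1)$ conditions are solved, the row-$(k+1)$ conditions determine the remaining free variables at row $k+1$, and finally the row-$1$ conditions determine the remaining variables at row $1$.

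Third, I would execute a case analysis on the $p$-adic valuations of the key entries, paralleling the proofs of Lemmas \ref{lemma2beta} and \ref{lemma3beta}. The case breakdown should mirror the term structure of the claimed formula. The leading term $p^{3n-k-\ell-9}$ comes from the generic case in which the determinantal expressions produced by row reduction are $p$-adic units; the middle terms with $(p-1)$ or $(p-1)(p+1)$ factors come from cases where one or two of the entries $a_{1(k+1)}$, $a_{(k+1)(\ell+1)}$, or $a_{(\ell+1)j}$ vanish modulo $p$; and the final $(p-1)(p-2)(p-3)\,p^{n-4}$ contribution arises from a cross-term condition in $v_{\ell+1}\circ v_{\ell+1}$ forcing three residues modulo $p$ to be pairwise distinct and nonzero, directly mirroring the cubic $(p-1)(p-2)(p-3)$ factor appearing in case (1) of Lemma \ref{lemma3beta}. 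The combinatorial coefficient $(n-\ell-2) + \binom{n-\ell-2}{2}$ suggests summing a ``single-$j$'' and a ``pair-of-$j$'s'' contribution indexed by columns $j$ with $\ell+1 < j < n$.

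The hardest part will be confirming that the simultaneous closure conditions really do decouple as described. Example \ref{maybe_qp_ex} warns that even when individual conditions are polynomial, their intersection can become quasi-polynomial via a hidden character-sum. For the diagonal $(2, 1, \ldots, 2, 1, \ldots, 2, 1, \ldots, 1)$ the decoupling should follow from the fact that after reducing along rows $\ell+1$ and $k+1$, each remaining congruence involves only a single new variable linearly, so the congruence-solving step of Proposition \ref{vivi_poly} applies verbatim and no elliptic-curve-type obstruction can appear. Verifying this requires explicitly tracking how each pivot of the row-reduced augmented matrix interacts with the columns $v_1, v_{k+1}, v_{\ell+1}$, which is the main technical bookkeeping that belongs in the appendix.
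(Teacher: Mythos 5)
Your high-level setup (row reduction, three distinguished rows, case analysis keyed on valuations) matches the paper, but the central claim that the closure conditions ``decouple'' by row so that one can solve row $\ell{+}1$, then row $k{+}1$, then row $1$ in sequence is not what happens, and as stated it would fail. In the actual system of congruences the mixed product $v_{\ell+1}\circ v_j$ for $j>\ell+1$ produces, after reduction along row $1$, a condition of the shape
\[
\frac{a_k b_\ell (c_j^2 - c_j)}{p^2} - \frac{a_\ell (c_j^2 - c_j)}{p} - \frac{a_k(b_j^2-b_j)}{p} \in \Z,
\]
which simultaneously ties the row-$1$ entry $a_k$, the row-$(k{+}1)$ entry $b_\ell$, and row-$1$, row-$(k{+}1)$, row-$(\ell{+}1)$ variables $a_\ell$, $b_j$, $c_j$ together. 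The tractability comes not from decoupling but from a dichotomy the student never identifies: the single condition $a_k b_\ell^2/p\in\Z$ forces $p\mid a_k$ or $p\mid b_\ell$, and because these entries live in $[0,p)$ that means $a_k=0$ or $b_\ell=0$. The paper's count is then an inclusion-exclusion $g_\alpha(p)=\#V_1(\Fp)+\#V_2(\Fp)-\#V_3(\Fp)$ over the three resulting substitutions, and each substitution kills the degree-$2$ denominator and collapses the mixed condition into something analyzable. Without noticing this dichotomy, the coupled system remains.

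The second concrete error is the assertion that ``each remaining congruence involves only a single new variable linearly, so the congruence-solving step of Proposition~\ref{vivi_poly} applies verbatim.'' That is false for this diagonal. After the substitutions one still has to solve a genuinely nonlinear relation; the paper's Case~3B(ii) involves eliminating variables to arrive at the cubic factorization
\[
a_\ell\, c_j\,(c_i - b_i)\,(c_j + c_i - 1) \equiv 0 \pmod{p},
\]
whose three nontrivial factors each seed a subcase, and one must separately rule out having two indices $j\neq m$ with $c_j\equiv c_m\equiv 1-c_i$ (this forces $c_i\equiv b_i$ and gets merged into another subcase). The $(p-1)(p-2)(p-3)$ factor arises because, once $a_\ell$ is a unit, one index has $c_i\not\equiv 0,1$ (giving $p-2$) and then $b_i\not\equiv 0,1,c_i$ (giving $p-3$); this is a two-variable constraint with asymmetric exclusions, not ``three residues pairwise distinct and nonzero.'' The coefficient $(n-\ell-2)+\binom{n-\ell-2}{2}$ does come from a single-index versus pair-of-indices choice as you guessed, but your mechanism for generating these cases does not match what the factorization actually produces. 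In short, the proposal has the right scaffolding and the right expected term shape, but it is missing the inclusion-exclusion trick that makes the problem tractable, and its ``linear solve'' claim would not survive contact with the actual equations.
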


\vspace{.05in}
\begin{cor} \label{cor222}
	Let $\alpha = (2, 1, \ldots, 2, 1, \ldots,2, 1, \ldots 1)$ be a composition of length $n-1$ with a 2 in the 1, $(k+1)$, and $(\ell+1)$ positions for $1 \leq k < \ell \leq n-2 $. Let $S_{2,2,2}$ be the set of all such compositions. Then
	\begin{align*}
		\sum_{\alpha \in S_{2,2,2}} g_\alpha(p) &= \frac{p^{n-4}}{24 (p-1)^2 (p+1)} \bigg(12 ( n-2)(n-3) p^{ n+1}+ 24 p^{n}\\
		& - 24 (n-1)(n-3) p^{n-1} - 24 p^{n-2} + 12  n(n-3) p^{n-3}\\
		&+ 24 p^{2 n-5} + (n-1)(n-2)(n-3)(n-4) p^{6}\\
		&- 4(n-1)(n-2)(n-3)(n-5) p^{5} + (n-1)(n-2)(n-3)(7n-44) p^4 \\
		&- 	4 (n-1)(n-2)(n^2-11n+27) p^3- (n-3) (5 n^3-43n^2+82n-56) p^2\\
		&+ 	4 (n-3) (2 n^3-19n^2 +46n -26) p  -(n-3) (n-4) (n-5) ( 3 n-2) \bigg).
	\end{align*}
\end{cor}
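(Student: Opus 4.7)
The plan is to apply Lemma \ref{lemma222} termwise and compute the double sum
$$
\sum_{\alpha \in S_{2,2,2}} g_\alpha(p) \;=\; \sum_{1 \le k < \ell \le n-2} g_\alpha(p)
$$
by splitting it into the six pieces appearing on the right-hand side of Lemma \ref{lemma222} and summing each separately. Since the compositions in $S_{2,2,2}$ are indexed by the positions $(k+1, \ell+1)$ of the two non-leading $2$'s, the indexing set is exactly $\{(k,\ell) : 1 \le k < \ell \le n-2\}$.

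First, I would isolate the two genuinely $p$-dependent ``large'' terms, namely $p^{3n-k-\ell-9}$ and the two terms with exponents $2n-k-7$ and $2n-\ell-6$. The first is a pure double geometric sum:
$$
p^{3n-9}\!\!\sum_{1 \le k<\ell\le n-2} p^{-k-\ell}
= p^{3n-9}\!\sum_{k=1}^{n-3}p^{-k}\sum_{\ell=k+1}^{n-2}p^{-\ell},
$$
which collapses via the standard geometric formula $\sum_{j=a}^{b} p^{-j} = \frac{p^{-a+1}-p^{-b}}{p-1}$. For the $(p-1)(p+1)p^{2n-k-7}(n-\ell-1)$ term, I would first sum on $\ell$ (giving an arithmetic sum $\sum_{\ell=k+1}^{n-2}(n-\ell-1) = \binom{n-k-2}{2}$) and then sum the resulting polynomial $\times p^{-k}$ on $k$; an analogous swap handles the $(n-k-1)(p-1)p^{2n-\ell-6}$ term. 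The remaining three terms have no $k$ or $\ell$ in the exponent of $p$, so their double sums reduce to evaluating the standard expressions
$$
\sum_{1 \le k<\ell\le n-2}1,\quad \sum_{1\le k<\ell\le n-2}(n-\ell-1),\quad \sum_{1\le k<\ell\le n-2}(n-k-2)(n-\ell-1),
$$
and $\sum_{1\le k<\ell\le n-2}\bigl[(n-\ell-2)+\binom{n-\ell-2}{2}\bigr]$, each of which is a standard polynomial in $n$ computed by repeated use of $\sum_{j=1}^m j$, $\sum_{j=1}^m j^2$, and $\sum_{j=1}^m j^3$.

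Once all six contributions are in hand, I would put them over the common denominator $24(p-1)^2(p+1)$ (the $(p+1)$ factor arises from the geometric sum $\sum_{k}p^{-k}\cdot (p-1)(p+1) p^{2n-k-7}$ and related expressions, while the factor $24$ appears to clear the $\binom{n-\ell-2}{2}$ and arithmetic-sum denominators), combine numerators, and group by the powers of $p$ that survive after the geometric collapse: the ``top'' powers $p^{2n-5}$, the intermediate powers $p^{n+1},p^n,\ldots,p^{n-3}$, and the ``constant-in-$n$ exponent'' part $p^6,p^5,\ldots,p^0$ which carries the polynomial-in-$n$ coefficients coming from the arithmetic sums in the non-geometric terms. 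A direct comparison with the right-hand side of the corollary then finishes the proof.

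The main obstacle is purely bookkeeping: the three geometric pieces contribute rational functions with denominators $(p-1)$ and $(p^2-1)$, and when brought over the common denominator their numerators contain many cancellations with the arithmetic-sum pieces. In particular, one must be careful that the summation bounds are nonvacuous for small $n$, which forces the $n-5,n-4,\ldots$ factors seen in the stated formula. I would verify the final expression against the cases $n=3,4,5,6$ tabulated in \cite[Tables 1 and 2]{akkm} as a sanity check; provided those match, no new ideas beyond the geometric and power-sum identities above are required.
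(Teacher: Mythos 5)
Your approach matches the paper's exactly: the paper proves Corollary~\ref{cor222} by applying Lemma~\ref{lemma222} termwise and summing each of the six pieces over $1 \le k < \ell \le n-2$ via geometric and power-sum identities (the paper delegates the resulting algebra to Mathematica). One small bookkeeping slip in your sketch: $\sum_{\ell=k+1}^{n-2}(n-\ell-1) = \binom{n-k-1}{2}$, not $\binom{n-k-2}{2}$ — this does not affect the validity of the method, but it would propagate if carried through.
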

We can now state the proof of Theorem \ref{n+2indexthm}.
\begin{proof}[Proof of Theorem \ref{n+2indexthm}]
Observe that $$g_n(p^{n+2}) = g_{n-1}(p^{n+1}) + \sum_{\alpha \in C_{n,n+2}} g_\alpha(p)$$ where $C_{n,n+2}$ is the set of all compositions of $n+2$ into $n-1$ parts. We proceed by induction on $n$. We use Proposition \ref{gn+1},  Lemma \ref{lemma4}, Corollaries  \ref{cor2beta},\ref{cor32} and \ref{cor222}, and the induction hypothesis. Then we simplify the expression.
\end{proof}

\subsection{Increasing difficulty in proving that $g_\alpha(p)$  is polynomial}
In order to prove that $g_{n}(p^{n+2})$ is a polynomial in $p$, we needed to consider all $\alpha$ which are compositions of $n+2$ into $n-1$ parts. In general, we can understand $f_n(p^e)$ recursively by considering $g_\alpha(p)$ for all $\alpha \in C_{n,e}$ and using Proposition \ref{recurrence}. For any fixed diagonal $\alpha = (e_1, \ldots, e_{n-1})$, we determine $g_\alpha(p)$ by counting the number of irreducible subring matrices of the form
$$
A = \begin{pmatrix}
	p^{e_1} & pa_{12} & pa_{13} & \cdots & pa_{1(n-1)} &1\\
	& p^{e_2} & pa_{23} & \cdots &pa_{2(n-1)}&1 \\
	&& p^{e_3} & \cdots & pa_{3(n-1)}&1 \\
	&&& \ddots & & \vdots\\
	&&&&p^{e_{n-1}}&1 \\
	&&&&&1
\end{pmatrix}.
$$

As discussed in Section \ref{background}, by our previous work in \cite{ish_subrings}, counting such irreducible subring matrices essentially reduces to counting the simultaneous solutions to a set of polynomials modulo powers of $p$. We conclude this paper by discussing the difficulty in counting the number of such systems of polynomials as $n$ and $e$ increase.

\begin{example}\label{first_diag_ex}
	Consider the diagonal $\alpha = (2,3,2,2).$ By employing the row reduction method, we compute $g_\alpha(p)$ by counting irreducible subring matrices of the form
	$$
	\begin{pmatrix}
		p^2 & pa_1 & pa_2 & pa_3&1\\
		&p^3 & pa_4 & pa_5&1\\
		&&p^2 &pa_6&1\\
		&&&p^2&1\\
		&&&&1
	\end{pmatrix}
	$$
	where $a_i \in [0,p)$ for $i \in \{1, 2, 3, 6\}$ and $a_4, a_5 \in [0,p^2)$. Notice that the condition $v_2 \circ v_2 \in \text{Col}(A)$ implies that $p \mid a_4$ so we can replace the entry $pa_4$ with $p^2a_4.$ By employing the row reduction method to the matrix
		$$
	\begin{pmatrix}
		p^2 & pa_1 & pa_2 & pa_3&1\\
		&p^3 & p^2a_4 & pa_5&1\\
		&&p^2 &pa_6&1\\
		&&&p^2&1\\
		&&&&1
	\end{pmatrix}
	$$
	the closure conditions simplify to
	\begin{align*}
	&	\frac{-a_1a_4a_5 + a_1a_4a_6}{p}\in \Z\\
	&	\frac{a_1a_4a_6^2 - a_1a_4a_6p - a_2a_6^2p  - a_1a_5^2 + a_1a_5}{p^2}\in \Z\\ 
	&\frac{-a_4a_6^2 + a_5^2}{p} \in \Z.
	\end{align*}

Observe that the numerators essentially define a system of polynomials that must vanish over $\Z/p^2\Z$ though the variables $a_i$ are restricted between $[0,p)$ for $i\ne 6.$ The first condition implies that $p \mid a_1, p \mid a_5,$ or $p \mid (a_5-a_6)$. If $p \mid a_1$, then $a_1 \in [0,p)$ implies that $a_1 =0$. Thus we can simplify the conditions to 
	\begin{align*}
	&	\frac{ - a_2a_6^2}{p}\in \text{Col}(A)\\ 
	&\frac{-a_4a_6^2 + a_5^2}{p} \in \text{Col}(A)
\end{align*}
The number of solutions to these conditions is equal to $p \cdot \#V(- a_2a_6^2, -a_4a_6^2 + a_5^2) $ where $V \subset \mb{A}^6(\F_p).$ Further case reduction gives that the number of solutions to the closure conditions under the assumption that $a_1 =0$ is $p \cdot (2p^3 -p^2) = p^3(2p-1).$

In order to give an exact formula for $g_\alpha(p)$, we would proceed by implementing similar arguments for the cases $p \mid a_5$ and $p \mid (a_5-a_6)$.
\end{example}

If one is only interested in whether or not $g_\alpha(p)$ is polynomial, the work outlined in Example \ref{first_diag_ex} is a bit easier. For example, we can tell that $V(- a_2a_6^2, -a_4a_6^2 + a_5^2) $ has a polynomial number of $\F_p$-rational points by just studying the equations rather than computing a polynomial formula.

\begin{example}
	Consider the diagonal $\alpha =(3,2,2,1,1,1)$. Determining whether $g_\alpha(p)$ is polynomial reduces to understanding solutions to 16 equations in 12 variables with some equations equal to 0 modulo $p$, and others equal to 0 modulo $p^2$. It is currently unknown whether $g_\alpha(p)$ is polynomial in this case.
\end{example}

As $n$ and $e$ increase, it seems infeasible to compute $g_\alpha(p)$ even with the aid of a computer algebra system like Sage \cite{sage} or Magma \cite{magma}. The work is especially difficult since we do not understand large systems of polynomials modulo several powers of $p$ very well. The strategy outlined above works best when we can show that the system of equations can be defined over $\Z/p\Z$. In this case, we can identify the system of polynomials with a variety defined over $\F_p$ and use techniques from arithmetic geometry to understand the corresponding variety.

While the row reduction strategy simplifies the work needed to understand $g_\alpha(p)$, it is not sufficient in the goal of proving $f_n(p^e)$ is polynomial for fixed $n$ and $e$ or searching for counterexamples to Question \ref{poly_ques}. We hope that future researchers will be motivated by this work to investigate Question \ref{poly_ques} further -- it seems likely that a new approach is needed.

\section{Acknowledgments}
This work was supported by the NSF grant DMS 1802281. The author thanks Nathan Kaplan for many helpful conversations that greatly improved the presentation of this paper. Parts of this work appeared in the author's Ph.D. thesis.

\appendix
\section{Proofs of Several Lemmas and Corollaries}
First, we prove Lemma \ref{lemma3beta} and Corollary \ref{cor32}.
\begin{proof}[Proof of Lemma \ref{lemma3beta}]
	
	Let $\alpha = (3, 1, \ldots, \beta, 1, \ldots, 1)$ be a composition of length $n-1$ with $\beta >1$ in the $(k+1)$ position for some $k \in [1, n-2]$. Consider the matrix
	$$
	\begin{pmatrix}
		p^3 & pa_1 & pa_2 & \cdots  & pa_k & pa_{k+1}&\cdots & pa_{n-2}&1\\
		& p & 0 &0  & 0&0&\cdots &0&1\\
		&  & p &0 &0 &0 &\cdots &0&1\\
		&& &\ddots  &\vdots&\vdots&&\vdots&\vdots\\
		&&&&p^\beta & pb_{k+1}& \cdots & pb_{n-2}&1\\
		&&&&&p&&0&1\\
		&&&&&&\ddots&\vdots&\vdots\\
		&&&&&&&p&1
	\end{pmatrix}.
	$$
	By augmenting the vector $v_i \circ v_j$ and row reducing, we obtain the following expressions that must simultaneously be integers
	\begin{align*}
		& \frac{a_i^2 - a_i}{p} \in \Z \text{ for } 1 \leq i < k\\
		& \frac{a_i a_j}{p} \in \Z  \text{ for } 1 \leq i  < k \text{ and for any }  j \ne i\\
		& \frac{a_k^2-a_kp^{\beta-1}}{p} \in \Z \\
		&\frac{a_k(a_j - b_j)}{p} \in \Z \text{ for } j > k\\
		&\frac{b_i b_j}{p^{\beta-2}} \in \Z \text{ for any } k < i < j\\
		& \frac{a_i a_j}{p} - \frac{a_k b_ib_j}{p^{\beta}}\in \Z \text{ for any } k < i < j\\
		&\frac{b_j^2 - b_j}{p^{\beta-2}} \text{ for any } j > k\\
		& \frac{a_j^2 - a_j}{p} - \frac{(b_j^2 -b_j)a_k}{p^\beta} \in \Z\text{ for any } j > k.
	\end{align*}
	
	The first equation comes from $v_i \circ v_i$ for $i  <k$. The second equation comes from $v_i \circ v_j$ where $i < k$ and $j \ne i$. The third equation is $v_k \circ v_k$. The fourth equation is from $v_k \circ v_j$ for $j > k$. The fifth and sixth equations are $v_i \circ v_j$ where $i\ne j$ and $i,j > k$. The last two equations are from $v_j \circ v_j $ where $j > k$. 
	
	By the first equation, $p \mid (a_i-1)$ or $p \mid a_i$ for all $i < k$. Further, $p \mid (a_i-1)$ for at most one $a_i$.
	\vspace{.1in}
	
	\noindent \textbf{Case 1:}  If $p \mid (a_i-1)$ for some $i$, then $p\mid a_j$ for all $i \ne j$. This reduces the equations to
	\begin{align*}
		& \frac{a_k}{p} \in \Z\\
		&\frac{b_i b_j}{p^{\beta-2}} \in \Z \text{ for any } k < i < j\\
		&  \frac{a_k b_ib_j}{p^{\beta}}\in \Z \text{ for any } k < i < j\\
		&\frac{b_j^2 - b_j}{p^{\beta-2}} \text{ for any } j > k\\
		&  \frac{(b_j^2 -b_j)a_k}{p^\beta} \in \Z\text{ for any } j > k.
	\end{align*}

	\noindent \textbf{Case 2:} If $p \mid a_i$ for all $i < k$, the equations reduce to 
	\begin{align*}
		& \frac{a_k}{p} \in \Z \\
		&\frac{b_i b_j}{p^{\beta-2}} \in \Z \text{ for any } k < i < j\\
		&\frac{a_i a_j}{p} - \frac{a_k b_ib_j}{p^{\beta}}\in \Z \text{ for any } k < i < j\\
		&\frac{b_j^2 - b_j}{p^{\beta-2}} \text{ for any } j > k\\
		&  \frac{a_j^2 - a_j}{p} -\frac{(b_j^2 -b_j)a_k}{p^\beta} \in \Z\text{ for any } j > k.
	\end{align*}
	\vspace{.1in}
	In either case, $a_k =dp$ for some $d \in [0,p)$. The number of solutions will be different depending on whether $\beta = 2$ or $\beta > 2$. 
	
	\vspace{.1in}
	\noindent\textbf{\Large Proof when $\beta = 2$:}
	\begin{table}[!htbp]\centering
		\begin{tabular}{|c|c|c|}
			\hline
			Case &	$d$ & Number of Solutions\\[6pt]
			\hline
			1A&	$d=0$ & $(k-1)p^{2n-k-5}$\\[6pt]
			\hline
			1B&	$d \ne 0$ & $(n-k-1)(k-1)p^{n-3}(p-1)$\\[6pt]
			\hline
		\end{tabular}
		\caption{Cases when $p \mid (a_i-1)$ for some $i < k$}
	\end{table}

	\noindent \textbf{Proof of Case 1:}
	Suppose that $p \mid (a_i -1)$ for some $i < k$ and $p \mid a_j$ for all $j \ne i$. Set $a_k = dp$ where $d \in [0,p)$. Given these conditions, $v_i \circ v_j \in \text{Col}(A)$ if and only if the following conditions are satisfied:
	\begin{align*}
		&  \frac{db_ib_j}{p}\in \Z \text{ for any } k < i < j\\
		&  \frac{(b_j^2 -b_j)d}{p} \in \Z\text{ for any } j > k.
	\end{align*}
	
	Observe that $a_i \in [0,p^2)$ for all $1\leq i \leq n-2$. Since all equations depend on $a_i \pmod{p}$, we solve for the congruence class of $a_i \pmod{p}$, $i \ne k$, and then multiply the final equations by $p^{n-3}$. 
	\vspace{.1in}
	
	\noindent\textbf{Proof of Case 1A:}
	If $d =0$, the two conditions are trivially satisfied. Since $b_i \in [0,p)$, there are $p$ choices for each $b_i$. Further, there are $p$ choices for each $a_i$, $i \ne k$ and there are $k-1$ ways to choose which $a_i \equiv 1\pmod{p}$. This gives
	$$
	(k-1) p^{n-3} \cdot p^{n-k-2} = (k-1)p^{2n-k-5}
	$$
	solutions.
	\vspace{.1in}
	
	\noindent\textbf{Proof of Case 1B:}
	If $d\ne 0$, the conditions simplify to:
	\begin{align*}
		&  \frac{b_ib_j}{p}\in \Z \text{ for any } k < i < j\\
		&  \frac{(b_j^2 -b_j)}{p} \in \Z\text{ for any } j > k.
	\end{align*}
	Either all $b_i = 0$ or there is exactly one $b_i = 1$. In either case, there is exactly one choice for each $b_i$ and there are $p$ choices for each $a_i$, $i \ne k$. There are $k-1$ ways to choose which $a_i \equiv 1\pmod{p}$. This gives
	$$
	(n-k-1)(k-1)p^{n-3}(p-1)
	$$
	solutions.
	
	There are a total of 
	$$
	(k-1)p^{2n-k-5}+ (n-k-1)(k-1)p^{n-3}(p-1)
	$$
	solutions in Case 1.
	
	\vspace{.1in}
	\begin{table}[!htbp]\centering
		\begin{tabular}{|c|c|p{3cm}|p{3cm}|c|}
			\hline
			Case & $d$ & $b_i$  & \text{Other Conditions} &  Number of Solutions\\[5pt]
			\hline
			2A&	$d=0$ & && $(n-k-1) p^{2n - k -5}$\\[5pt]
			\hline
			2B&	$d \ne 0$ & $b_i \in \{0,1\}$ \text{ for all } $i > k$&& $(n-k-1)^2 p^{n-3}(p-1)$\\[5pt]
			\hline
			2C(i)&	$d =1$ & $b_i \not \in \{0,1\}$ \text{ for some } $i > k$ & $a_j \equiv b_j \pmod{p}$ for all $j > k$ &$ p^{2n-k-5}  - (n-k-1)p^{n-3}$\\[5pt]
			\hline
			2C(ii)&	$d \ne 0$ & $b_i \not \in \{0,1\}$ \text{ for some } $i > k$ & $a_j \equiv 0 \pmod{p}$ for all $j > k$ & $(n-k-2) p^{n-3}(p-2)(p-3)$\\[5pt]
			\hline
			2C(iii)&	$d \ne 0$ & $b_i \not \in \{0,1\}$ \text{ for some } $i > k$ & $a_j \equiv 1-a_i \pmod{p}$ for exactly one $j \ne i,k$ & $\binom{n-k-2}{2}p^{n-3} (p-2)(p-3)$\\[5pt]
			\hline
		\end{tabular}
		\caption{Cases when $p \mid a_i$ for all $i < k$}
	\end{table}
	\vspace{.1in}
	
	\noindent \textbf{Proof of Case 2:}
	Suppose that $p \mid a_i$ for all $i  <k$. Then $v_i \circ v_j \in \text{Col}(A)$ for all $1 \leq i \leq j \leq n-2$ if and only if 
	\begin{align}
		&\frac{a_i a_j}{p} - \frac{db_ib_j}{p}\in \Z \text{ for any } k < i < j \label{eqnC2.1}\\
		&  \frac{a_j^2 - a_j}{p} -\frac{(b_j^2 -b_j)d}{p} \in \Z\text{ for any } j > k\label{eqnC2.2}.
	\end{align}
	As in Case 1, we solve for $a_i \pmod{p}$ and multiply the number of solutions by $p^{n-3}$. 
	\vspace{.1in}
	
	\noindent\textbf{Proof of Case 2A:}
	If $d =0$, these conditions reduce to
	\begin{align*}
		&\frac{a_i a_j}{p}\in \Z \text{ for any } k < i < j \\
		&  \frac{a_j^2 - a_j}{p} \in \Z\text{ for any } j > k.
	\end{align*}
	The $b_i \in [0,p)$ are arbitrary and we solve these equations for the $a_i$ as before. We have a total of
	$$
	(n-k-1) p^{n-3} \cdot p^{n-k-2} = (n-k-1) p^{2n - k -5}
	$$
	solutions.
	\vspace{.1in}
	
	Now suppose $d \ne 0$. If any $a_i \equiv 0, 1\pmod{p}$ for $k < i < n-1$, then $p \mid (b_i^2 - b_i)$. Since $b_i \in [0,p)$, then $b_i =0$ or 1. Similarly, if $b_i =0,1$ for some $k < i < n-1$, then $a_i \equiv 0, 1 \pmod{p}$. 
	\vspace{.1in}
	
	\noindent \textbf{Proof of Case 2B:}
	Suppose all $b_i \in\{0,1\}$ for $i > k$. Then all $a_i \equiv 0,1 \pmod{p}$ for $i > k$.
	
	Observe that if $b_i = b_j = 1$ for $i \ne j$, then $a_i a_j \equiv d \pmod{p}$ and since $d \ne 0$, then $a_i, a_j \equiv 1 \pmod{p}$. Thus $d= 1$ and $a_j \equiv b_j \pmod{p}$ for all $j > k$. We include this count later in Case 2C(i). We are left to count the number of solutions to Case 2B with at most one $b_i =1$.
	
	If at most one $b_i =1$, then $a_i a_j \equiv 0 \pmod{p}$ for all $i \ne j > k$ and so at most one $a_j\equiv 1 \pmod{p}$. This gives
	$$
	(n-k-1)^2 p^{n-3}(p-1)
	$$
	solutions, choosing the indices $i, j$ so that $b_i =1$ and $a_j \equiv 1 \pmod{p}$. 
	\vspace{.1in}
	
	\noindent \textbf{Proof of Case 2C:}
	Suppose that not all $b_i \in \{0,1\}$. Pick any $b_i \ne 0,1 $ (so $a_i \not \equiv 0,1 \pmod{p})$. We can solve for $d$ using \eqref{eqnC2.2}. This gives
	\begin{equation} \label{eqnd}
		d  = \frac{a_i^2 - a_i}{b_i^2 - b_i}.
	\end{equation}
	We plug this into \eqref{eqnC2.1}, giving
	$$
	a_i a_j = \frac{a_i^2 - a_i}{b_i^2 - b_i} b_i b_j.
	$$
	Rearranging, we can solve for $b_j$ since $a_i \not \in \{0,1\}$. Thus
	\begin{equation} \label{eqnbj}
		b_j  = \frac{(b_i-1)a_j}{a_i -1}.
	\end{equation}
	Therefore all $b_j$ are determined by $a_i, b_i$, and $a_j$ for $j \ne i$. Now, plugging \eqref{eqnd} and \eqref{eqnbj} into \eqref{eqnC2.2}, we have
	$$
	a_j^2 - a_j -  \frac{a_i^2 - a_i}{b_i^2 - b_i}\left( \left( \frac{(b_i-1)a_j}{a_i -1}\right)^2 -  \frac{(b_i-1)a_j}{a_i -1}\right) \equiv 0 \pmod{p}.
	$$
	We simplify to obtain
	$$
	a_j^2 - a_j - \frac{-a_ia_j^2(b_i-1) + (a_i-1)a_i a_j}{(a_i - 1)b_i} \equiv 0 \pmod{p}.
	$$
	We then multiply by the denominator and factor to find
	\begin{equation}\label{eqnaj}
		a_j (1-a_j -a_i)(b_i -a_i) \equiv 0 \pmod{p}.
	\end{equation}
	Either $a_j \equiv 0 \pmod{p}$, $a_j \equiv 1 - a_i \pmod{p}$, or $a_i \equiv b_i \pmod{p}$ for each $j  > k$. As we have seen, if $a_i \equiv b_i \pmod{p}$, then $d=1$ and $a_\ell \equiv b_\ell \pmod{p}$ for all $\ell > k$. We count this case later in Case 2C(i). Otherwise, we see that $a_j$ is determined and we are left to understand the equation $a_i^2 - a_i - d(b_i^2 - b_i) \equiv 0 \pmod{p}$. 
	
	Suppose that $1- a_j - a_i\equiv 0 \pmod{p}$ and $1- a_\ell - a_i \equiv 0 \pmod{p}$ for two indices $j , \ell > k$ and $j \ne \ell$. Then $a_j \equiv a_\ell \equiv 1-a_i \pmod{p}$. We plug this into \eqref{eqnbj} for indices $j, \ell$ and we find that $b_j = b_\ell=1-b_i$. We can solve $d \equiv \frac{a_j a_\ell}{b_j b_\ell} \pmod{p}$ using \eqref{eqnC2.1}. Setting the two expressions for $d$ equal to each other and substituting, we find
	$$
	(a_i-1)b_i \equiv (b_i-1)a_i \pmod{p}.
	$$
	This implies  $b_i \equiv a_i \pmod{p}$. We substitute this and the formula for $a_j$ into the first type of equation to obtain
	$$
	a_i (1-a_i) - da_i(1-a_i ) \equiv 0 \pmod{p}.
	$$
	But this equation implies $a_i \equiv 1\pmod{p}$. Thus we have reached a contradiction as we assumed $b_i, a_i \not \equiv 0,1 \pmod{p}$. So we can have at most one $a_j \equiv 1-a_i \pmod{p}$.
	\vspace{.1in}
	
	\noindent\textbf{Proof of Case 2C(i):}
	Suppose that $a_i \equiv b_i \pmod{p}$. Then $d = 1$ and $a_j \equiv b_j \pmod{p}$ for all $j > k$. This gives
	$$
	p^{n-3} \cdot p^{n-k-2}  - (n-k-1)p^{n-3}
	$$
	solutions discounting the case when all $b_i$ and $a_i$ are in $\{0,1\}$ (which just depends on picking the index $i$ so that $b_i \equiv a_i \equiv 1 \pmod{p}$ or picking all $b_i \equiv  a_i \equiv  0 \pmod{p}$).
	
	This leaves two cases: all $a_j \equiv 0\pmod{p}$ for $i \ne j$ and exactly one $a_j \equiv 1-a_i \pmod{p}$. For both cases, observe that all $b_j , a_j$ for $j \ne i$ are determined and we are left to solve 
	$$
	a_i^2 - a_i - d(b_i^2 - b_i) \equiv 0 \pmod{p}.
	$$
	
	\noindent\textbf{Proof of Case 2C(ii):}
	If all $a_j \equiv 0 \pmod{p}$ for $j \ne i, k$, then all other variables are fixed (and all $b_j =0$ for $j\ne i,k$). Since $b_i \ne 0,1$, then we can solve for $d$ using \eqref{eqnd}. We multiply by $(n-k-2)$ to choose the $i$ so that $a_i, b_i \not \equiv 0,1 \pmod{p}$. 
	$$
	(n-k-2) p^{n-3}(p-2)(p-3)
	$$
	solutions. Note that $a_i \not \equiv b_i \pmod{p}$ since we counted this in Case 2C(i).
	\vspace{.1in}
	
	\noindent\textbf{Proof of Case 2C(iii):}
	Suppose exactly one $a_j \equiv 1- a_i \pmod{p}$ and all other $a_\ell \equiv 0\pmod{p}$. Note this implies $b_j =1-b_i$ and $b_\ell = 0$ for all other $\ell >k$. We find the number of solutions to this equation and then multiply by $\binom{n-k-2}{2}$ to pick the two indices $i,j$ choosing the $a_i$ and $a_j \equiv 1-a_i \pmod{p}$. Since $b_i \ne 0,1$, then we can solve for $d$ using \eqref{eqnd} as before and this leaves $a_i, b_i \not \equiv 0,1 \pmod{p}$. Again, we want $a_i \not \equiv b_i \pmod{p}$ since this case was counted earlier. Thus there are
	$$
	\binom{n-k-2}{2}p^{n-3} (p-2)(p-3)
	$$
	solutions in this case.
	\vspace{.1in}
	
	\noindent \textbf{\Large Proof when $\beta > 2$}\\
	\begin{table}[!htbp]\centering
		\begin{tabular}{|c|c|c|}
			\hline
			Case &	$d$ & Number of Solutions\\[6pt]
			\hline
			1A&	$d=0$    & $(n-k-1)(k-1) p^{2n-k-5}$\\[6pt]
			\hline
			1B&	$d \ne 0$ & $(n-k-1)(k-1)p^{n-3}(p-1)$\\[6pt]
			\hline
		\end{tabular}
		\caption{Cases when $p \mid (a_i-1)$ for some $i < k$}
	\end{table}
	
	\vspace{.1in}
	
	\noindent \textbf{Proof of Case 1:} Suppose $p \mid (a_i -1)$ for some $i < k$, so $p \mid a_j$ for all $j \ne i$. Let $a_k = dp$ for some $d \in [0, p)$. Then $v_i \circ v_j \in \text{Col}(A)$ for all $ 1\leq i < j \leq n-2$ if and only if the following equations are satisfied
	\begin{align}
		&\frac{b_i b_j}{p^{\beta-2}} \in \Z \text{ for any } k < i < j\label{eqngC1.1}\\
		&  \frac{d b_ib_j}{p^{\beta-1}}\in \Z \text{ for any } k < i < j\label{eqngC1.2}\\
		&\frac{b_j^2 - b_j}{p^{\beta-2}} \text{ for any } j > k\label{eqngC1.3}\\
		&  \frac{(b_j^2 -b_j)d}{p^{\beta-1}} \in \Z\text{ for any } j > k\label{eqngC1.4}.
	\end{align}
	
	Since the $a_j$ do not appear in the above equations, we can choose any $a_j \in [0,p^2)$ so that $p \mid a_j$ for $j \ne i, k$ and $p \mid (a_i-1)$. We multiply each subcase in Case 1 by $p^{n-3}$.
	\vspace{.1in}
	
	\noindent \textbf{Proof of Case 1A:} If $d =0$, then conditions \eqref{eqngC1.2} and \eqref{eqngC1.4} vanish. Thus we need to solve $p^{\beta-2} \mid b_i b_j$ for all $k < i < j$ and $p^{\beta-2} \mid b_j(b_j-1)$ for all $j > k$. Recall that $b_i \in [0,p^{\beta-1})$ for all $i > k$. Observe that $p^{\beta-2} \mid (b_j-1)$ for at most one $j > k$. In either case, there are $p$ choices for $b_i, i > k$ and there are $n-k-1$ ways to pick the index $j$ so that $p^{\beta-2} \mid (b_j-1)$ or pick no such index. There are $k-1$ ways to choose the $a_i \equiv 1 \pmod{p}$ for some $i < k$. This leads to 
	$$
	(n-k-1) (k-1)p^{n-3} p^{n-k-2} = (n-k-1)(k-1) p^{2n-k-5}
	$$
	solutions.
	\vspace{.1in}
	
	\noindent \textbf{Proof of Case 1B:}
	Suppose $d \ne 0.$ In this case, conditions  \eqref{eqngC1.2} and \eqref{eqngC1.4} do not vanish. These are equivalent to $p^{\beta-1} \mid b_j(b_j-1)$ for $j > k$ and $p^{\beta-1} \mid b_i b_j$ for $k > j > i$. In particular, \eqref{eqngC1.4} implies that $p^{\beta-1} \mid b_j$ or $p^{\beta-1} \mid (b_j-1)$ for all $j >k$. Since $b_j \in [0, p^{\beta-1})$, then $p^{\beta-1} \mid b_j$ implies $b_j =0$ and $p^{\beta-1} \mid (b_j-1)$ implies that $b_j =1$.
	
	Observe that there exists at most one index $j >k$ so that $b_j=1$. There are $n-k-1$ ways to choose at most one index $j > k$ so that $b_j=1$. There are $k-1$ ways to choose the index $i$ so that $a_i \equiv 1 \pmod{p}$. This gives
	$$
	(k-1)(n-k-1) p^{n-3} (p-1) 
	$$
	solutions.

	\noindent \textbf{Proof of Case 2:} Suppose $p \mid a_i$ for all $i < k$ and $a_k = dp$ for some $d \in [0,p)$. The conditions reduce to 
	\begin{align}
		&\frac{b_i b_j}{p^{\beta-2}} \in \Z \text{ for any } k < i < j \label{eqngC2.1}\\
		&\frac{a_i a_j}{p} - \frac{d b_ib_j}{p^{\beta-1}}\in \Z \text{ for any } k < i < j\label{eqngC2.2}\\
		&\frac{b_j^2 - b_j}{p^{\beta-2}} \text{ for any } j > k\label{eqngC2.3}\\
		&  \frac{a_j^2 - a_j}{p} -\frac{(b_j^2 -b_j)d}{p^{\beta-1}} \in \Z\text{ for any } j > k\label{eqngC2.4}.
	\end{align}
		\begin{table}[!htbp]\centering
		\begin{tabular}{|c|c|p{2cm}|p{2cm}|c|}
			\hline
			Case & $d$ & $b_i$  & Other Conditions &  Number of Solutions\\[5pt]
			\hline
			2A&	$d=0$ & && $(n-k-1)^2 p^{2n-k-5}$\\[5pt]
			\hline
			2B(i)a&	$d \ne 0$ &$p^{\beta-2} \mid b_j$ for all $j > k$ & $p \mid a_i$ for all $i > k$& $p^{n-3}(p-1)$\\[5pt]
			\hline
			2B(i)b&	$d \ne 0$ &$p^{\beta-2} \mid b_j$ for all $j > k$ & $p \mid (a_i-1)$ for one $i > k$& $(n-k-2)p^{n-3}(p-1)^2$\\[5pt]
			\hline
			2B(ii)a & $d \ne 0$ & $p^{\beta-2} \mid (b_j-1)$ for one $j > k$ & $p \mid a_i$ for all $i  > k, i\ne j$ &  $(n-k-2)p^{n-2} (p-1)$\\[5pt]
			\hline
			2B(ii)b1 & $d \ne 0$ & $p^{\beta-2} \mid (b_j-1)$ for one $j > k$ & $p \mid(a_i-1)$ for one $i  > k, i\ne j$ &  $(n-k-2)(n-k-3) p^{n-3}(p-1)$\\[5pt]
			\hline
			2B(ii)b2 & $d \ne 0$ & $p^{\beta-2} \mid (b_j-1)$ for one $j > k$ & $a_i \not \equiv 0,1 \pmod{p}$ for one $i  > k, i\ne j$ &  $(n-k-2)(n-k-3)p^{n-3}(p-1)(p-2)$\\[5pt]
			\hline
		\end{tabular}
		\caption{Cases when $p \mid a_i$ for all $i < k$}
	\end{table}
	\vspace{.1in}
	
	\noindent \textbf{Proof of Case 2A:} If $d =0$, the conditions reduce further,
	\begin{align*}
		&\frac{b_i b_j}{p^{\beta-2}} \in \Z \text{ for any } k < i < j\\
		&\frac{b_j^2 - b_j}{p^{\beta-2}} \text{ for any } j > k\\
		&\frac{a_i a_j}{p}\in \Z \text{ for any } k < i < j\\
		&  \frac{a_j^2 - a_j}{p}  \in \Z\text{ for any } j > k.
	\end{align*}
	As before, either all $b_j$ are divisible by $p^{\beta-2}$ or exactly one $b_j-1$ is divisible by $p^{\beta-2}$. Similarly, all $a_i \equiv 0 \pmod{p}$ or exactly one $a_i \equiv 1 \pmod{p}$. This gives
	$$
	(n-k-1)^2 p^{n-3} p^{n-k-2}
	$$
	solutions.
	\vspace{.1in}
	
	\noindent\textbf{Proof of Case 2B:} Suppose $d \ne 0$. The conditions \eqref{eqngC2.1} and \eqref{eqngC2.3} tell us that either $p^{\beta-2} \mid b_j$ for all $j > k$ or exactly one $b_j -1$ is divisible by $p^{\beta-2}$.
	\vspace{.1in}
	
	\noindent\textbf{Proof of Case 2B(i):} Suppose $p^{\beta-2} \mid b_j$ for all $j > k$. Certainly $p^{\beta-1} \mid b_i b_j$ for $i > k, i \ne j$. Therefore condition \ref{eqngC2.2} reduces to $p \mid a_i a_j$ for all $k < i  < j$. Thus at most one $a_i$ is not divisible by $p$.
	\vspace{.1in}
	
	\noindent\textbf{Proof of Case 2B(i)a:}
	If all $a_i$ are divisible by $p$, then we require $p^{\beta-1} \mid (b_j^2-b_j)$ for all $j > k$. Thus $p^{\beta-1} \mid b_j$ for all $j > k$ and so $b_j =0$ for all $j > k$. We have
	$$
	p^{n-3}(p-1)
	$$
	solutions.
	\vspace{.1in}
	
	\noindent\textbf{Proof of Case 2B(i)b:}
	If one $a_j \not \equiv 0 \pmod{p}$, then \eqref{eqngC2.4} does not reduce for index $j$, but does for indices $i\ne j, k$. That is, \eqref{eqngC2.4} becomes $\frac{a_j^2 - a_j}{p} -\frac{(b_j^2 -b_j)d}{p^{\beta-1}} \in \Z$ and $\frac{(b_i^2-b_i)d}{p^{\beta-1}}\in\Z$ for all $i \ne j$. Since $p^{\beta-2} \mid b_i$ and $p^{\beta-1} \mid (b_i^2-b_i)$, then all $b_i =0$ for $i \ne j$. Since $p^{\beta-2} \mid b_j$, then $b_j = n_j p^{\beta-2}$ for some $n_j \in [0,p)$. Condition \eqref{eqngC2.4} for index $j$ reduces to
	$$
	p \mid (a_j^2 -a_j - n_j d).
	$$
	Since $d \ne 0$, we can solve for $n_j \equiv \frac{a_j^2 -a_j}{d} \pmod{p}$. Recall that $n_j \in [0,p), d \in (0,p),$ and $a_j \in (0,p)$. This gives
	$$
	(n-k-2)p^{n-3}(p-1)^2
	$$ 
	solutions. 
	\vspace{.1in}
	
	\noindent \textbf{Proof of Case 2B(ii):} Suppose $p^{\beta-2} \mid (b_j-1)$ for some $j > k$ and $p^{\beta-2} \mid b_i$ for all $i > k, i \ne j$. Write $b_i = p^{\beta-2} n_i$ where $n_i \in [0, p)$. We see that \eqref{eqngC2.4} reduces to 
	$\frac{(a_i^2 - a_i) + n_id}{p} \in \Z$
	for all $i > k, i \ne j$ and $ \frac{a_j^2 - a_j}{p} -\frac{(b_j^2 -b_j)d}{p^{\beta-1}} \in \Z$. Condition \eqref{eqngC2.2} reduces to $p \mid a_i a_{\ell}$ whenever $i, \ell> k$ are not equal to $j$ and $\frac{a_i a_j}{p} - \frac{d n_ib_j}{p} \in \Z$ whenever $i > k, i \ne j$. Thus we see that at most one $a_i \not \equiv 0 \pmod{p}$ with $i > k, i\ne j$.
	\vspace{.1in}
	
	\noindent \textbf{Proof of Case 2B(ii)a:} If all $a_i \equiv 0 \pmod{p}$ for $i > k, i\ne j$, then we are left with the conditions
	\begin{align*}
		& \frac{dn_i b_j}{p} \in \Z\\
		&  \frac{a_j^2 - a_j}{p} -\frac{(b_j^2 -b_j)d}{p^{\beta-1}}  \in \Z\text{ for any } j > k.
	\end{align*}
	Since $p \nmid d, b_j$, the first equation is satisfied if $p \mid n_i$ for all $i>k$, that is, if $p^{\beta-1} \mid b_i$ for all $i > k, i \ne j$. This implies that $b_i= 0$ for all $i\ne j, i > k$. Since $p^{\beta-2} \mid (b_j-1)$, set $b_j - 1 = r_j p^{\beta-2}$ for some $r_j \in [0,p)$. We are left to solve
	$$
	(a_j^2 - a_j - b_j d r_j) \equiv 0 \pmod{p}.
	$$
	Observe that $b_j = 1 + r_j p^{\beta-2}$, $d \in (0,p)$, and $r_j \in [0,p)$. We can plug in to obtain
	$$
	(a_j^2 - a_j - dr_j) \equiv 0 \pmod{p}.
	$$
	Since $p \nmid d$, we can solve $r_j \equiv \frac{a_j^2 - a_j}{d} \pmod{p}$. Recall that $r_j \in [0,p)$. We have $a_j \in [0,p)$ and $d \in (0,p)$. Further all $b_i$ are fixed. Here we have
	$$
	(n-k-2)p^{n-3} (p-1)  \cdot p = (n-k-2)p^{n-2} (p-1)
	$$
	solutions.
	\vspace{.1in}
	
	\noindent \textbf{Proof of Case 2B(ii)b:} Suppose one $a_i \not \equiv 0 \pmod{p}$ for some $i > k, i \ne j$. We are left with the conditions
	\begin{align}
		& \frac{(a_i^2 - a_i) + n_id}{p} \in \Z \label{eqng2bii1}\\
		& \frac{a_ia_j}{p} - \frac{dn_i b_j}{p} \in \Z\label{eqng2bii2}\\
		&  \frac{a_j^2 - a_j}{p} -\frac{(b_j^2 -b_j)d}{p^{\beta-1}}  \in \Z\label{eqng2bii3}.
	\end{align}
	We can solve \eqref{eqng2bii2} for $n_i$ since $p \nmid d, b_j$. We obtain
	$$
	n_i \equiv \frac{a_i a_j}{db_j} \pmod{p}.
	$$
	We plug this into \eqref{eqng2bii1}. This gives
	$$
	(b_j(a_i^2 - a_i) + a_ia_jd) \equiv 0 \pmod{p}.
	$$
	By factoring, 
	$$
	a_i (b_j (a_i-1) + a_jd) \equiv 0 \pmod{p}.
	$$
	\vspace{.1in}
	
	\noindent \textbf{Proof of Case 2B(ii)b1:}
	If $a_i \equiv 1\pmod{p}$, then we see that $p \mid a_j$ and so $p^{\beta-1} \mid (b_j-1)$. Further, $p \mid dn_i b_j$ and so $p^{\beta-1} \mid b_i$ for all $i > k$. This gives
	$$
	(n-k-3)(n-k-2) p^{n-3}(p-1)
	$$
	solutions.
	\vspace{.1in}
	
	\noindent \textbf{Proof of Case 2B(ii)b2:} Otherwise, if $a_i \not \equiv 0,1 \pmod{p}$, then we can solve 
	\begin{equation} \label{eqnbj2}
		b_j \equiv \frac{d a_j}{1-a_i} \pmod{p}.
	\end{equation}
	Recall that $b_j  \equiv 1 \pmod{p}$ and so 
	$$
	d a_j \equiv 1 - a_i \pmod{p}.
	$$
	Thus we can solve for $a_j \equiv \frac{1-a_i}{d} \pmod{p}$. We plug this into \eqref{eqnbj2} to find $b_j \equiv a_j \pmod{p}$. Therefore \eqref{eqng2bii3} becomes
	$$
	\frac{a_j^2 - a_j}{p} - \frac{d(a_j^2 - a_j)}{p^{\beta-1}}.
	$$
	Since $b_j \equiv 1 \pmod{p^{\beta-2}},$ then so is $a_j$, so setting $a_j -1 = r_i p^{\beta-2}$ for some $r_i \in [0,p)$ gives
	$$
	p \mid d a_j r_i 
	$$
	and so we must have $a_j \equiv b_j \equiv 1 \pmod{p^{\beta-1}}$. We can choose any $a_i \in [1,p)$ and any $d \in (0,p)$. There is a fixed choice for all $b_\ell$ and for $a_j$. Thus we have
	$$
	(n-k-2)(n-k-3)p^{n-3}(p-1)(p-2)
	$$
	solutions.
\end{proof}

\begin{proof}[Proof of Corollary \ref{cor32}]
	For each term in the summand of $g_\alpha$ from Lemma \ref{lemma3beta}, we sum from $k=1$ to $n-2$. We then simplify each term. Each individual sum is found using Mathematica \cite{mathematica}.
\end{proof}

Lastly, we give the proofs of Lemma \ref{lemma222} and Corollary \ref{cor222}.

\begin{proof}[Proof of Lemma \ref{lemma222}]
	Consider the matrix
	\setcounter{MaxMatrixCols}{15}
	$$
	\begin{pmatrix}
		p^2 & pa_1 &  \cdots  & pa_k & pa_{k+1}& \cdots &pa_{\ell} & pa_{\ell+1} &\cdots & pa_{n-1}&1\\
		& p  &\cdots &0&0&\cdots &0 &0&\cdots&0&1\\
		&&\ddots  &\vdots&\vdots&&\vdots&\vdots&&\vdots&\vdots\\
		&&&p^2 & pb_{k+1}& \cdots & pb_{\ell}&pb_{\ell+1}&\cdots & pb_{n-1}&1\\
		&&&&p&\cdots&0&0&\cdots&0&1\\
		&&&&&\ddots&\vdots&\vdots&&\vdots&\vdots\\
		&&&&&&p^2&pc_{\ell+1}&\cdots&pc_{n-1}&1\\
		&&&&&&&p & \cdots &0&1\\
		&&&&&&&&\ddots &\vdots&\vdots\\
		&&&&&&&&&p&1
	\end{pmatrix}.
	$$
	\vspace{.2in}
	
	We use the row reduction method to determine all conditions that must be satisfied in order for this matrix to be an irreducible subring matrix. We obtain the conditions\\
	\begin{align*}
		&\frac{a_k(b_i^2 - b_i)}{p} \in \Z \text{ for } k < i < \ell\\
		&\frac{a_kb_\ell^2}{p} \in \Z \\
		& \frac{b_\ell(c_i^2 -c_i)}{p} \in \Z \text{ for } i > \ell\\
		& \frac{a_k b_\ell (c_i^2 - c_i)}{p^2} -\frac{a_\ell(c_i^2 - c_i)}{p} - \frac{a_k(b_i^2 - b_i)}{p} \in \Z \text{ for } i > \ell\\
		&\frac{a_k b_i b_j}{p} \in \Z \text{ for } k < i < j < \ell\\
		&\frac{a_k b_\ell(b_j - c_j)}{p} \in \Z \text{ for } j  > \ell\\	
		&\frac{b_\ell c_i c_j}{p} \in \Z \text{ for } \ell < i < j\\
		&\frac{a_kb_\ell c_i c_j}{p^2} - \frac{a_\ell c_i c_j}{p} - \frac{a_k b_i b_j}{p} \in \Z \text{ for } \ell < i < j.
	\end{align*}
	\vspace{.1in}
	
	Note that $\frac{a_kb_\ell^2}{p} \in \Z$ implies that either $p \mid a_k$ or $p \mid b_\ell$. We determine the number of solutions to three different sets of equations : (1) $p \mid a_k$, (2) $p \mid b_\ell$, and (3) $p$ divides both. Once we make these substitutions, the denominators are either 1 or $p$. We omit any term with denominator 1, since this term is already in $\Z$ and we see that all $a_i, b_i$ and $c_i$ are determined by their value $\pmod{p}$ since $a_i, b_i, c_i \in [0,p)$. We will reframe the problem in terms of counting $\F_p$-points on varieties since each denominator remaining is equal to $p$. 
	\vspace{.2in}
	
	\noindent 	\textbf{All equations when $p \mid a_k$ ($V_1$):} Since $a_k \in [0,p)$ then $p \mid a_k$ implies that $a_k =0$. We plug this into the conditions, which reduce to 
	\begin{align*}
		& \frac{b_\ell(c_i^2 -c_i)}{p} \in \Z \text{ for } i > \ell\\
		& \frac{a_\ell(c_i^2 - c_i)}{p}  \in \Z \text{ for } i > \ell\\
		&\frac{b_\ell c_i c_j}{p} \in \Z \text{ for } i > \ell, j > i\\
		& \frac{a_\ell c_i c_j}{p}  \in \Z\text{ for } i > \ell, j > i.
	\end{align*}
	Let $V_1$ be the variety over $\F_p$ defined by the polynomials 
		\begin{align*}
		&b_\ell(c_i^2 -c_i) \\
		& a_\ell(c_i^2 - c_i) \\
		&b_\ell c_i c_j \\
		& a_\ell c_i c_j
	\end{align*}
for all $\ell < i < j$.
	\vspace{.2in}
	
	\noindent\textbf{All equations when $p \mid b_\ell$ $(V_2)$:}
	As above, $p \mid b_\ell$ implies that $b_\ell =0$. We plug this into the conditions, which reduce to 
	\begin{align*}
		&\frac{a_k(b_i^2 - b_i)}{p} \in \Z \text{ for } k < i < \ell\\
		& \frac{a_\ell(c_i^2 - c_i)}{p}  - \frac{a_k(b_i^2 - b_i)}{p} \in \Z \text{ for } i > \ell\\
		&\frac{a_k b_i b_j}{p} \in \Z \text{ for } k < i < \ell, j > i\\
		& \frac{a_\ell c_i c_j}{p} - \frac{a_k b_i b_j}{p} \in \Z \text{ for } i > \ell, j > i.\end{align*}
	Let $V_2$ be the variety over $\F_p$ whose defining polynomials are 
		\begin{align*}
		&a_k(b_i^2 - b_i)\\
		&a_k b_i b_j 
		\end{align*}
	for $k < i < \ell $ and $i < j$, and 
		\begin{align*}
		& a_\ell(c_i^2 - c_i)-a_k(b_i^2 - b_i) \\
		& a_\ell c_i c_j - a_k b_i b_j\end{align*}
	for $\ell < i < j.$
	\vspace{.2in}
	
	\noindent \textbf{All equations when $p \mid a_k$ and $p \mid b_\ell$ ($V_3)$:}
	As before, we set $a_k = b_\ell =0$. The conditions reduce to 
	\begin{align*}
		& \frac{a_\ell(c_i^2 - c_i)}{p}  \in \Z \text{ for } i > \ell\\
		&\frac{a_\ell c_i c_j}{p}  \in \Z \text{ for } i > \ell, j > i.
	\end{align*}
	Let $V_3$ be the variety in $\F_p$ with defining polynomials $a_\ell(c_i^2 - c_i)$ and $a_\ell c_i c_j$ for $\ell < i < j$.
	
	The number of irreducible subring matrices with diagonal $\alpha$ is then
	$$ g_\alpha(p) = \#V_1(\F_p) + \#V_2(\F_p) - \#V_3(\F_p).
	$$
	
	\noindent \textbf{Claim 1:} We have
	$$
	\#V_1(\F_p) = p^{3n -k-\ell-9} + (n-\ell-1) (p-1) (p+1)p^{2n -k-7}.
	$$
	
	We summarize the cases for Claim 1 in Table \ref{t5}.
	\vspace{.1in}
	
	\begin{table}[!htbp]\centering
		\begin{tabular}{|c|c|c|c|}
			\hline
			Case &	$b_\ell$ & $a_\ell$ & Number of Solutions\\[6pt]
			\hline
			1&	0 & 0  & $p^{3n -k-\ell-9}$\\[6pt]
			\hline
			2&	0 & $\ne 0$ & $(n-\ell-1) (p-1) p^{2n -k-7}$\\[6pt]
			\hline
			3&	$\ne 0$ & 0 & $(n-\ell-1) (p-1) p^{2n -k-7}$\\[6pt]
			\hline
			4&	$\ne 0$ & $\ne 0$ & $(n-\ell-1) (p-1)^2 p^{2n - k-7}$\\[6pt]
			\hline
		\end{tabular}

		\caption{Cases when $p \mid a_k$}
			\label{t5}
	\end{table}
	
	\noindent \textbf{Claim 2:}	We have
	\begin{align*}
		\#V_2(\Fp) - \#V_3(\Fp)  &=(n-k-1)(p-1) p^{2n-\ell-6} + (\ell-k-1)(n-\ell-1)(p-1)^2p^{n-4} \\
		& - (n-\ell-1)(p-1)p^{n-4} \\
		& + \left((n-\ell-2) + \binom{n-\ell-2}{2}\right)(p-1)(p-2)(p-3)p^{n-4}.
	\end{align*}
	\vspace{.1in}
	
	We summarize the cases for Claim 2 in the Table \ref{t6}.
	
	\begin{table}[!htbp]\centering
		\begin{tabular}{|c|c|c|p{2cm}|p{1.7cm}|c|}
			\hline
			Case &$a_k$ & $a_\ell$ &$b_i$ & Other Conditions &  Number of Solutions\\[6pt]
			\hline
			1 & 0 &  & && \text{Not counted}\\
			\hline
			2&$\ne 0$&	0& && $(n-k-2) (p-1)p^{2n-\ell-6}$\\[6pt]
			\hline
			3A&	$\ne 0$ &$\ne 0$ & &$b_i =1$ for one $k < i < \ell$ & $(\ell-k-1)  (n-\ell-1)  (p-1)^2  p^{n-4}$\\[6pt]
			\hline
			3B(i)&$\ne 0$ &$\ne 0$ &$b_i =0$ for all $k < i< \ell$,$b_i \in \{0,1\}$ for all $i$ &Not all $b_i = c_i$ for $i  > \ell$&  $(n-\ell-1)^2 (p-1)^2p^{n-4}$\\[6pt]
			\hline
			3B(ii)a&$\ne 0$ &$\ne 0$ &$b_i =0$ for all $k < i < \ell$, at least one $b_j \ne 0$ for $j > \ell$ &$c_i=b_i$&$p^{n-\ell-2} p^{n-4}(p-1)  - (n-\ell-1) p^{n-4}(p-1)$  \\[6pt]
			\hline
			3B(ii)b&$\ne 0$ &$\ne 0$ &$b_i =0$ for all $k < i < \ell$, at least one $b_j \ne 0$ for $j > \ell$ &$c_j=0$ for all $j >\ell$& $(n-\ell-2)(p-1)(p-2)(p-3)p^{n-4}$ \\[6pt]
			\hline
			3B(ii)c&$\ne 0$ &$\ne 0$ &$b_i =0$ for all $k < i < \ell$, at least one $b_j \ne 0$ for $j > \ell$ &$c_j=1-c_i$ for exactly one $j > \ell$&$\binom{n-\ell-2}{2} (p-1)(p-2) (p-3) p^{n-4}$  \\[6pt]
			\hline
		\end{tabular}

		\caption{Cases when $p \mid b_\ell$ }
			\label{t6}
	\end{table}
	
	\noindent Together, Claims 1 and 2 prove the lemma.
	\vspace{.1in}
	
	\noindent \textbf{Proof of Claim 1:}
	
	First observe that there are no conditions on $a_i$ for $i \ne \ell, k$ and $b_i $ for $k < i < n-1$ and $i \ne \ell$. Therefore we multiply each case by $p^{n-4} \cdot p^{n-k-3} = p^{n-k-7}.$
	\vspace{.1in}
	
	\noindent \textbf{Proof of Case 1:} If $b_\ell = a_\ell = 0$ there are no conditions for any other variable. So we obtain
	$$
	p^{n-4} \cdot p^{n-k-3} \cdot p^{n-\ell-2} = p^{3n -k-\ell-9}
	$$
	solutions.
	\vspace{.1in}
	
	\noindent \textbf{Proof of Case 2:} If $b_\ell = 0$ but $a_\ell \ne 0$ we must solve the equations over $\F_p$:
	\begin{align*}
		& (c_i^2 - c_i) =0 \text{ for } i > \ell\\
		& c_i c_j =0 \text{ for } i > \ell, j > i.
	\end{align*}
	
	Either every $c_i = 0$ or there is exactly one $c_i =1$. There are $n -\ell -2$ ways to pick the $c_i = 1$ and there is one way to pick all $c_i =0$. Therefore we have
	$$
	(n-\ell-1) (p-1) p^{2n -k-7}
	$$
	solutions.
	\vspace{.1in}
	
	\noindent \textbf{Proof of Case 3:} If $a_\ell =0 $ but $b_\ell \ne 0$, we solve the same equations as in Case 2. So we have
	$$
	(n-\ell-1) (p-1) p^{2n - k-7}
	$$
	solutions.
	\vspace{.1in}
	
	\noindent \textbf{Proof of Case 4:} If $a_\ell \ne 0$ and $b_\ell \ne 0$, we solve the same equations as in Case 2 and now there are $p-1$ choices for $a_\ell$ and $p-1$ choices for $b_\ell$ giving
	$$
	(n-\ell-1) (p-1)^2 p^{2n - k-7}
	$$
	solutions.
	\vspace{.1in}
	
	In total we have
	$$
	\#V_1(\Fp) = p^{3n -k-\ell-9} + 2(n-\ell-1) (p-1) p^{2n - k-7} + (n-\ell-1) (p-1)^2 p^{2n - k-7}.
	$$
	We can simplify to
	$$
	\#V_1(\Fp) = p^{3n -k-\ell-9} + (n-\ell-1) (p-1) (p+1)p^{2n -k-7}.
	$$
	
	\vspace{.1in}
	
	\noindent \textbf{Proof of Claim 2:}
	Observe that $a_i$ for $i \ne k, \ell$ is arbitrary, so we multiply each case by $p^{n-4}$. Recall that $b_\ell =0$.
	\vspace{.1in}
	
	\noindent \textbf{Proof of Case 1:} Suppose $a_k=0$. We have already found the solutions when $a_ k= b_\ell =0$, so we don't count it again here. This allows us to count $\#V_2(\F_p)  - \#V_3(\F_p)$ in Claim 2.
	\vspace{.1in}
	
	\noindent \textbf{Proof of Case 2:} Suppose $a_\ell =0, a_k \ne 0$. The equations in $\F_p$ reduce to
	\begin{align*}
		&(b_i^2 - b_i) =0 \text{ for } k < i \\
		&b_i b_j =0 \text{ for } k < i <j.
	\end{align*}
	Either all $b_i =0$ or there is exactly one $b_i =1$. In either case, we see that $c_i$ for $\ell < i < n-1$ are arbitrary. There are $n-k-3$ ways to pick the $b_i =1$ (we know $b_\ell=0$) and there is one way to pick all $b_i =0$. We thus have
	$$
	(n-k-2)(p-1)p^{n-4} \cdot 1 \cdot p^{n-\ell -2} =(n-k-2) (p-1)p^{2n-\ell-6}
	$$
	solutions.
	\vspace{.1in}
	
	\noindent \textbf{Proof of Case 3:}  Suppose both $a_\ell \ne 0$ and $a_k \ne 0$. Start with the equations in $\F_p$
	\begin{align*}
		&(b_i^2 - b_i) =0\text{ for } k < i < \ell\\
		&b_i b_j =0\text{ for } k < i < \ell, j > i.
	\end{align*}
	
	\noindent \textbf{Proof of Case 3A:} Suppose exactly one $b_i=1$ for some $k < i< \ell$, so all $b_i = 0$ for $i \ne k, \ell$. There are $(\ell-k-1)$ ways to choose an index $k < i< \ell$ so that $b_i =1$. The equations defining $V_2$ over $\F_p$ then reduce to 
	\begin{align*}
		& a_\ell(c_i^2 - c_i)=0 \text{ for } i > \ell\\
		& a_\ell c_i c_j =0\text{ for } i > \ell, j > i.
	\end{align*}
	There are $(n-\ell-1)$ ways to choose at most one $c_i =1$. Therefore we have a total of 
	$$
	(\ell-k-1)  (n-\ell-1)  (p-1)^2  p^{n-4}
	$$
	solutions.
	\vspace{.1in}
	
	\noindent \textbf{Proof of Case 3B:} Suppose $b_i =0$ for $k < i < \ell$. The equations defining $V_2$ over $\F_p$ reduce to
	\begin{align}
		& a_\ell(c_i^2 - c_i) -a_k(b_i^2 - b_i) =0 \text{ for } i > \ell \label{eqnC3.1}\\
		& a_\ell c_i c_j -a_k b_i b_j=0 \text{ for } i > \ell, j > i \label{eqnC3.2}.
	\end{align}
	
	Observe that $b_i = 0,1$ if and only if $c_i =0,1$.
	\vspace{.1in}
	
	\noindent \textbf{Proof of Case 3B(i):} First suppose that all $b_i \in \{0,1\}$ and $b_i =0$ for $k < i< \ell$. If $b_i = b_j = 1$, then by \eqref{eqnC3.2}, $a_\ell c_i c_j = a_k$ and so $c_i c_j = a_k a_\ell^{-1}$. Since $c_i, c_j \in \{0,1\}$ we must have that $c_i  = c_j = 1$ and so $a_k = a_\ell$. Therefore $c_m = b_m$ for all $m > \ell$ and $a_k = a_\ell$. We count this case later in Case 3B(ii)a.
	
	If at most one $b_i =1$ then at most $c_j =1$. Therefore we have
	$$
	(n-\ell-1)^2 (p-1)^2p^{n-4}
	$$
	solutions.
	\vspace{.1in}
	
	\noindent \textbf{Proof of Case 3B(ii):}
	Suppose that at least one $b_i \ne 0,1$. Then we can solve for $a_k = \frac{a_\ell (c_i^2 - c_i)}{(b_i^2-b_i)}$ using \eqref{eqnC3.1}. We plug this into \eqref{eqnC3.2} to find
	$$
	a_\ell c_i c_j - \frac{a_\ell (c_i^2 - c_i)}{(b_i^2-b_i)}b_ib_j =0.
	$$
	Since $c_i \ne 0,1$, simplifying gives
	\begin{equation} \label{eqn2bj}
		b_j = \frac{c_j(b_i-1)}{(c_i-1)}.
	\end{equation}
	
	Now plug these equations into \eqref{eqnC3.2} to obtain the expression
	$$
	a_\ell(c_j^2 -c_j) - \frac{a_\ell (c_i^2 - c_i)}{(b_i^2-b_i)}\left(\frac{c_j^2(b_i-1)^2}{(c_i-1)^2} - \frac{c_j(b_i-1)}{(c_i-1)}\right) =0.
	$$
	
	This reduces to
	$$
	a_\ell(c_j^2 -c_j)  - \frac{a_\ell c_i c_j^2 (b_i-1)}{b_i (c_i-1)} + \frac{a_\ell c_i c_j}{b_i } =0.
	$$
	
	Clearing denominators,
	$$
	a_\ell c_j (c_j-1) (c_i-1) b_i - a_\ell c_i c_j^2 (b_i-1) + a_\ell c_i c_j (c_i-1) =0
	$$
	and so 
	$$
	a_\ell c_j \left((c_j-1)(c_i-1)b_i - c_i c_j (b_i-1) + c_i(c_i-1) \right) =0
	$$
	which reduces to
	$$
	a_\ell c_j \left(c_jc_ib_i - c_i b_i - c_jb_i + b_i - c_i c_j b_i+ c_ic_j +c_i^2 - c_i \right) =0.
	$$
	Finally, we simplify this equation to
	\begin{equation} \label{eqncj}
		a_\ell c_j (c_i -b_i)(c_j + c_i - 1)  =0.
	\end{equation}
	
	We know that $a_\ell \ne 0$. Further, since at least one of the terms in the product must be equal to 0, then the $c_j$ are determined. Thus it remains to solve $a_\ell (c_i^2-c_i) - a_k (b_i^2-b_i) =0. $
	
	Suppose that $c_j = 1-c_i$ and $c_m = 1-c_i$ for $ j \ne m > \ell$. Then plugging into \eqref{eqn2bj}, we see $b_j  = b_m = 1-b_i$. Recall that $a_\ell = a_k \frac{b_i^2 - b_i}{c_i^2-c_i}$ from \eqref{eqnC3.1} and $a_\ell=a_k \frac{b_jb_m}{c_jb_m}$ from \eqref{eqnC3.2}. Therefore setting these equal to each other and substituting gives
	$$
	b_i(c_i-1) = c_i (b_i-1)
	$$
	which implies that $c_i = b_i$. Thus $a_\ell = a_k$ and $b_j  = c_j$ for all $j > i$. We solve this case below in Case 3B(ii)a.
	\vspace{.in}
	
	\noindent \textbf{Proof of Case 3B(ii)a:}
	Suppose $c_i = b_i$. Then $b_j = c_j$ for all $j > i$ and $a_k = a_\ell$. The $c_j \in [0,p)$, $a_k = a_\ell$, and $b_j$ are all determined by $c_j$. The variable $a_\ell \in (0,p)$. This gives
	$$
	p^{n-\ell-2} p^{n-4}(p-1)  - (n-\ell-1) p^{n-4}(p-1)
	$$
	solutions discounting the case when all $b_j, c_j$ are in $\{0,1\}$.
	\vspace{.1in}
	
	\noindent \textbf{Proof of Case 3B(ii)b:}
	Suppose all $c_j =0$. We are left to solve $a_\ell (c_i^2 - c_i) -a_k (b_i^2 -b_i) =0.$  Observe that there are $(n -\ell -2)$ ways to pick the $i$ so that $b_i, c_i \ne 0$. We can solve for $a_\ell$ and then pick $a_k \ne 0$. There are $(p-2)$ ways to pick $c_i \ne 0,1$ and $(p-3)$ ways to pick $b_i \ne c_i, 0,1$. This gives
	$$
	(n-\ell-2)(p-1)(p-2)(p-3)p^{n-4}
	$$
	solutions.
	\vspace{.1in}
	
	\noindent \textbf{Proof of Case 3B(ii)c:}
	Suppose exactly one $c_j = 1-c_i$. Then the rest of $c_m = 0$ for $m \ne i, j$. We are left to solve $a_\ell (c_i^2 - c_i) -a_k (b_i^2 -b_i) =0.$ Again, $a_\ell$ is determined and $a_k \ne 0$. We can choose $i,j$ so that $c_i \ne 0,1$ and $c_j = 1-c_i$. There are $p-2$ choices for $c_i$ and $p-3$ choices for $b_i$. The rest of the variables are fixed. We have
	$$
	\binom{n-\ell-2}{2} (p-1)(p-2) (p-3) p^{n-4}
	$$
	solutions.
	\vspace{.2in}
	
	In total, 
	\begin{align*}
		\#V_2(\Fp) - \#V_3(\Fp)  &=(n-k-1)(p-1) p^{2n-\ell-6} + (n-k-2)(n-\ell-1)(p-1)^2p^{n-4}\\
		& - (n-\ell-1)(p-1) p^{n-4} \\
		&+ \left((n-\ell-2) + \binom{n-\ell-2}{2}\right)(p-1)(p-2)(p-3)p^{n-4}.
	\end{align*}
	
\end{proof}

\begin{proof}[Proof of Corollary \ref{cor222}]
	We prove this summation one term at a time using the terms in Lemma \ref{lemma222}. Each summation is found using Mathematica \cite{mathematica}.
\end{proof}

\bibliographystyle{habbrv}
\bibliography{../../Bibliography/bib_all}

\end{document}